
\documentclass[letterpaper, 10 pt, conference]{ieeeconf}  

\IEEEoverridecommandlockouts                              
\overrideIEEEmargins

\usepackage{TLC}


\usepackage{amsmath}


\usepackage[style=ieee,
            url=false,
            eprint=false,
            date=year]{biblatex}
\addbibresource{./relaxation_cdc.bib}

\title{\LARGE \bf
Relaxation systems and cyclic monotonicity
}


\author{Thomas Chaffey*, Henk J. van Waarde*, Rodolphe Sepulchre
\thanks{*These authors contributed equally.}
\thanks{T. Chaffey is with the University of Cambridge, Department of Engineering,
Trumpington Street, CB2 1PZ, \texttt{tlc37@cam.ac.uk}. H. J. van Waarde is with the Bernoulli Institute for Mathematics, Computer
Science, and Artificial Intelligence, University of Groningen, Nijenborgh 9,
9747 AG, Groningen, Netherlands, \texttt{h.j.van.waarde@rug.nl}.
R. Sepulchre is with KU Leuven,
Department of Electrical Engineering (STADIUS),
KasteelPark Arenberg, 10,
B-3001 Leuven, Belgium,
\texttt{rodolphe.sepulchre@kuleuven.be}.}
\thanks{The research leading to these results has received funding from the European Research Council under the
Advanced ERC Grant Agreement SpikyControl n.101054323.  The work of T. Chaffey was supported by Pembroke College, Cambridge.
}}

\begin{document}

\maketitle
\thispagestyle{empty}
\pagestyle{empty}

\begin{abstract}
        It is shown that an LTI system is a relaxation system if and only if its
        Hankel operator is cyclic monotone.  Cyclic monotonicity of the Hankel
        operator implies the existence of a storage function whose gradient is the
        Hankel operator.  This storage is a function of past inputs alone, is
        independent of the state space realization, and admits a generalization to
        nonlinear circuit elements. 
\end{abstract}

\section{Introduction}

Relaxation systems are a class of LTI systems which first arose in the study of relaxation phenomena in viscoelastic
materials, and, in the finite dimensional case, correspond to RC and RL circuits
\autocite{Meixner1964}.  Relaxation systems are highly structured.  They correspond
to systems with completely monotonic impulse responses, with transfer functions which are sums of first order lags \autocite{Bernstein1928,
Widder1941, Meixner1964} and it was shown by \textcite{Willems1972a} that they admit
state space realizations which are both externally symmetric, corresponding to the
circuit property of reciprocity, and internally symmetric, encoding the fact that all
the energy storage elements are of the same type.  
There has been a recent revival of interest in relaxation systems
\autocite{Pates2019, Pates2022, Grussler2022, Grussler2022a, Bar-Shalom2023, Yafaev2015,
Yafaev2015a, Margaliot2019}. For example, it was observed by Pates \emph{et al.} 
\autocite{Pates2019, Pates2022} that they
admit very simple $H_\infty$-optimal controllers, with highly structured circuit
realizations.

Dissipativity theory \autocite{Willems1972} connects the circuit theory of passivity to the dynamical
systems theory of stability via the \emph{storage function}, which represents the
energy stored in a system.  
For a relaxation system, there exists a 
storage function which is completely determined by the Hankel operator, that is, the
future output in response to a past input \autocite{Willems1972a}. Relaxation 
systems therefore represent a class of systems for which
the storage can be defined externally, as a function of past input only.

Existing characterizations of relaxation systems rely on linearity
and time invariance.
We are motivated by a characterization that is not 
limited to LTI systems. This paper presents some preliminary steps in this direction,
through connections to monotone operator theory.  The property of
monotonicity was originally introduced in efforts to generalize the property of
passivity to networks of nonlinear resistors \autocite{Duffin1946, Minty1960,
Minty1961, Minty1962}.  Monotone operator theory now forms a pillar of convex
optimization theory \autocite{Rockafellar1976, Bauschke2011, Combettes2018, Ryu2022a},
owing to the fact that the gradient of a convex function is a monotone operator.

An early question in the theory of monotone operators was when the converse is true,
when is a monotone operator the gradient of a convex function?  This question was
answered by \textcite{Rockafellar1966, Rockafellar1970b}, who showed that a stronger
property than monotonicity is required: cyclic monotonicity.

In this paper, we reconnect the property of cyclic monotonicity with its
circuit theoretic origins, showing that cyclic monotonicity corresponds precisely to
relaxation, that is, to circuits with a single type of energy storage element.  Our
main result shows that an equivalent characterization of relaxation is that a
system's Hankel operator is cyclic monotone. For single input, single output LTI
operators, this equivalence was shown independently in the recent work of Yafaev
\autocite{Yafaev2015, Yafaev2015a}. Our proof is MIMO, and uses a state space
representation.  Cyclic monotonicity of the Hankel operator
implies that it is the gradient of some convex functional, and we show that this convex
functional is precisely the intrinsic storage of a relaxation system observed by
Willems.  Because cyclic monotonicity is not restricted to linear systems, our 
characterization opens the way to a nonlinear concept of relaxation.

Cyclic monotonicity has previously been studied in the context of Lur'e
systems \autocite{Adly2017a, Scherer2022}, multi-agent systems
\autocite{Sharf2019} and recently in the context of incrementally
port-Hamiltonian systems \autocite{Camlibel2023}, where it was shown that
a port-Hamiltonian system with a maximal cyclic monotone Dirac structure may be defined
in terms of a convex function of the state and input.  In contrast, we consider
cyclic monotonicity of an \emph{external} map, the Hankel operator, that maps past inputs to future outputs.

The remainder of this paper is structured as follows.  In Section~\ref{sec:prelims},
we introduce the necessary preliminary material from the theory of passivity and
monotone operators.  
In Section~\ref{sec:relaxation_cyclic}, we give the first of our
main results, that relaxation is equivalent to cyclic monotonicity of the Hankel
operator.  In Section~\ref{sec:LTI_storage}, we introduce a new notion of an
intrinsic storage functional and show that the convex functional whose
gradient is the Hankel operator is the intrinsic storage of Willems. Conclusions 
and directions for future work are given in Section~\ref{sec:conclusions}.

\section{Preliminaries}\label{sec:prelims}

\subsection{State space systems and Hankel operators}

We study linear, time-invariant state space systems of the form
\begin{IEEEeqnarray*}{rCl}
        \dot x(t) &=& A x(t) + B u(t)\IEEEyesnumber \label{eq:system}\\
             y(t) &=& C x(t) + D u(t),
\end{IEEEeqnarray*}
where $x(t) \in \R^n$, $u(t) \in \R^m$, $y(t) \in \R^p$ $A \in \R^{n\times n}$,  $B \in \R^{n
\times m}$, $C \in \R^{p \times n}$ and $D \in \R^{p \times m}$.
A system is said to be \emph{stable} if $A$ is Hurwitz, and \emph{minimal} if 
$(A, B)$ is controllable and $(A, C)$ is observable.  The transfer function of 
system \eqref{eq:system} is given by $H(s) := C(sI - A)^{-1}B + D$, and the impulse response is given by
$
h(t) := D\delta(t) + Ce^{A t}B, 
$ where $\delta(t)$ denotes the Dirac delta.  We also define $g(t) := Ce^{A t}B$ to be
the impulse response of the system with no feedthrough term.

A complete inner product space is called a Hilbert space.
The space $L_{2}(\R, \R^n)$ is the set of signals $u:
\R \to \R^n$ such that
\begin{IEEEeqnarray*}{rCl}
\int_{-\infty}^\infty u(t)\tran u(t) \dd{t} < \infty.
\end{IEEEeqnarray*}
This space forms a Hilbert space of equivalence classes of
functions when equipped with the inner product
\begin{IEEEeqnarray*}{rCl}
\ip{u}{y} := \int_{-\infty}^\infty u(t)\tran y(t) \dd{t},
\end{IEEEeqnarray*}
which induces the norm $\norm{u} := \sqrt{\ip{u}{u}}$.  We define $L_2(\R_{\geq 0},
\R^n)$ and
$L_2(\R_{\leq 0}, \R^n)$ similarly, but with  time axes of $[0, \infty)$ and  $(-\infty, 0]$,
respectively.  We will use the shorthand notation $L_2^n$ for $L_2(\R_{\geq 0},
\R^n)$.

A stable system admits a \emph{Hankel operator}, which maps an input
on $L_{2}(\R_{\leq 0}, \R)$ to the corresponding output on $L_{2}(\R_{\geq 0}, \R)$,
assuming zero input from time $0$.  Given an impulse response $h$ and input $\bar{u} \in L_{2}(\R_{\leq 0}, \R)$, the
output of the Hankel operator $\Gamma_h$ at time $t$ is given by
\begin{IEEEeqnarray*}{rCl}
y(t) &=& \int_{-\infty}^0 h(t - \tau)\bar{u}(\tau) \dd{\tau}.
\end{IEEEeqnarray*}
Letting $u(t) := \bar{u}(-t)$, 
the Hankel operator has the expression
\begin{IEEEeqnarray*}{rCl}
\left(\Gamma_h u\right)(t) &:=& \int_0^\infty h(t + \tau)u(\tau) \dd{\tau},
\end{IEEEeqnarray*}
and defines an operator on $L_{2}(\R_{\geq 0}, \R)$.  If the system is stable, the
Hankel operator is continuous \autocite[Prop. 4.1]{Partington1988}.

For the remainder of this paper, we will consider systems which are \emph{square},
that is, the input dimension $m$ is equal to the output dimension $p$.


\subsection{Passivity, reciprocity and relaxation}

Passivity is a formalization of the notion that a system can be realized without any
internal power source.  Central to the theory of passivity is the storage function,
which represents the energy stored within a system. We recall the following
definition of passivity.

\begin{definition}[\protect{\autocite[Def. 5]{Hughes2017c}}]\label{def:passive}
        A system of the form \eqref{eq:system} is said to be \emph{passive} if, for
        any input/output trajectory $(u, y)$ of the system and $t_0 \in \R$, there
        exists a $K \in \R$ such that, if $(\hat u, \hat y)$ is also an input/output
        trajectory of the system and $(\hat u(t), \hat y(t)) = (u(t), v(t))$ for all
        $t < t_0$, then
        \begin{IEEEeqnarray*}{rCl}
                -\int_{t_0}^{t_1} \hat u(t)\tran \hat y(t) \dd{t} \leq K
        \end{IEEEeqnarray*}
        for all $t_1 \geq t_0$.
\end{definition}

It is shown in \autocite[Thm. 13]{Hughes2017c} that, for a (not necessarily minimal)
system of the form \eqref{eq:system}, Definition~\ref{def:passive} is
equivalent to the existence of a matrix $Q = Q\tran \succeq 0$ satisfying the linear matrix
inequality
\begin{IEEEeqnarray}{rCl}
        \begin{pmatrix} A\tran Q + Q A & Q B - C \tran \\
                        B \tran Q - C & - D - D\tran \end{pmatrix} \preceq
                        0.\label{eq:lmi}
\end{IEEEeqnarray}
This is precisely the condition given by \autocite[Thm. 3]{Willems1972a} in the
context of minimal LTI state space systems.

A signature matrix is a diagonal matrix whose diagonal entries are either $1$ or
$-1$.
\begin{definition}
        A system of the form \eqref{eq:system} is said to be \emph{(externally)
        reciprocal} with respect to the signature matrix $\Sigma_e$ if  $\Sigma_e
        H(s) = \Sigma_e H(s)\tran$, where $H(s)$ is the transfer matrix of
        \eqref{eq:system}.
\end{definition}

Reciprocal systems admit internally reciprocal state space realizations.
\begin{theorem}[\protect{\autocite[Thm. 6]{Willems1972a}}]
        A system of the form \eqref{eq:system} is reciprocal if and only if it admits a state space realization
        $(A, B, C, D)$ such that
        \begin{IEEEeqnarray*}{rCl}
        \begin{pmatrix} \Sigma_i & 0 \\ 0 & \Sigma_e \end{pmatrix}
        \begin{pmatrix} -A & -B \\ C & D \end{pmatrix}
        &=& 
        \begin{pmatrix} -A\tran & C\tran \\ -B\tran & D\tran \end{pmatrix}
        \begin{pmatrix} \Sigma_i & 0 \\ 0 & \Sigma_e \end{pmatrix},
        \end{IEEEeqnarray*}
where $\Sigma_i$ is a signature matrix.
\end{theorem}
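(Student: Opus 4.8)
The plan is to first unpack the single matrix identity into its four $2\times 2$ block equations. Multiplying out both sides and using that $\Sigma_i$ and $\Sigma_e$ are symmetric involutions, the identity is equivalent to the three conditions $\Sigma_i A = A\tran \Sigma_i$, $\;\Sigma_i B = -C\tran \Sigma_e$, and $\Sigma_e D = D\tran \Sigma_e$ (the $(2,1)$ block is simply the transpose of the $(1,2)$ block). Call a realization satisfying these \emph{internally reciprocal}. The easy direction is sufficiency: given such a realization, I would substitute $A\tran = \Sigma_i A \Sigma_i$, $\;B\tran = -\Sigma_e C \Sigma_i$, and $C\tran \Sigma_e = -\Sigma_i B$ into $H(s)\tran \Sigma_e = B\tran(sI - A\tran)^{-1}C\tran\Sigma_e + D\tran \Sigma_e$, use $(sI - A\tran)^{-1} = \Sigma_i (sI-A)^{-1}\Sigma_i$, and collapse everything (with $\Sigma_i^2 = I$) to $\Sigma_e C(sI-A)^{-1}B + \Sigma_e D = \Sigma_e H(s)$, which says exactly that $\Sigma_e H(s)$ is symmetric, i.e.\ external reciprocity. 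This is a direct computation with no real obstacle.

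For necessity I would work with a minimal realization, since it suffices to exhibit one internally reciprocal realization of $H(s)$. Letting $s \to \infty$ in the reciprocity condition gives $\Sigma_e D = D\tran \Sigma_e$ for free, so the feedthrough condition is automatic and is preserved under coordinate change. The strictly proper part then satisfies $\Sigma_e C(sI-A)^{-1}B = B\tran(sI - A\tran)^{-1}C\tran \Sigma_e$, which says that $(A, B, \Sigma_e C)$ and its transpose realization $(A\tran, C\tran \Sigma_e, B\tran)$ realize the same transfer matrix. Because $\Sigma_e$ is invertible, both are minimal, so there is a \emph{unique} invertible $T$ with $A\tran = TAT^{-1}$, $\;C\tran \Sigma_e = TB$, and $B\tran = \Sigma_e C T^{-1}$.

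The crux, and the step I expect to be the main obstacle, is to show that $T$ is symmetric. I would rewrite the three intertwining relations as $TA = A\tran T$, $\;TB = C\tran \Sigma_e$, and $\Sigma_e C = B\tran T$, transpose each of them, and observe that the three transposed identities are precisely the same three relations with $T$ replaced by $T\tran$; by the uniqueness of the similarity between two minimal realizations, $T\tran = T$. With $T = T\tran$ invertible in hand, I would invoke Sylvester's law of inertia to factor $-T = S\tran \Sigma_i S$ with $S$ invertible and $\Sigma_i$ the signature matrix of $-T$ (the minus sign is what delivers $\Sigma_i B = -C\tran\Sigma_e$ rather than a spurious plus sign). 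Applying the coordinate change $z = Sx$ and substituting $\Sigma_i S = -(S\tran)^{-1}T$ together with the three relations, a short calculation verifies $\Sigma_i \hat A = \hat A\tran \Sigma_i$ and $\Sigma_i \hat B = -\hat C\tran \Sigma_e$ for the transformed data $(\hat A, \hat B, \hat C, D) = (SAS^{-1}, SB, CS^{-1}, D)$, which together with the feedthrough condition is the desired internally reciprocal realization. Everything beyond routine bookkeeping reduces to the symmetry of $T$, which in turn rests entirely on the uniqueness of state-space similarity for minimal systems.
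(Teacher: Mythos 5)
The paper states this theorem without proof, citing Willems (1972), and your argument is correct and is essentially the classical one from that reference: unpack the identity into the three block relations, verify sufficiency by direct computation, and for necessity invoke the uniqueness of the similarity between the minimal realizations $(A,B,\Sigma_e C)$ and $(A\tran, C\tran\Sigma_e, B\tran)$ to conclude $T = T\tran$, then congruence-transform $-T$ to a signature matrix $\Sigma_i$. Your reading of reciprocity as symmetry of $\Sigma_e H(s)$, i.e.\ $\Sigma_e H(s) = H(s)\tran\Sigma_e$, is the intended one (the paper's displayed definition contains a typo), and your sign bookkeeping in $-T = S\tran\Sigma_i S$ is exactly what is needed to produce $\Sigma_i B = -C\tran\Sigma_e$.
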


We now define relaxation systems, the main subject of this paper.
\begin{definition}\label{def:relaxation}
        A system of the form \eqref{eq:system} is said to be a \emph{relaxation system} if
        $D = D\tran \succeq 0$ and $g(t) = Ce^{A t}B$ is a completely
        monotonic function for $t \in [0, \infty)$:
        \begin{IEEEeqnarray*}{+rCl+x*}
        g(t) &=& g(t)\tran \text{ for all } t \geq 0,\\
        (-1)^k \td{^k}{t^k} g(t) &\succeq&  0 \text{ for all } k = 1, 2, \ldots \text{
        and } t \geq 0. & \qedhere
        \end{IEEEeqnarray*}
\end{definition}

Relaxation systems first arose in the context of viscoelastic materials
\autocite{Meixner1964}, and, in the context of electrical circuits, correspond to the
impedances of RC circuits and the admittances of RL circuits.
Several equivalent characterizations of relaxation systems are
known in the literature \autocite{Meixner1964, Willems1972a, Willems1976,
Marcus1975, Bernstein1928, Aissen1951}, which we collect in the following theorem.
\begin{theorem}\label{thm:relaxation_state_space}
        Consider a system of the form \eqref{eq:system}. Then the following are
        equivalent: 
        \begin{enumerate}
                \item the system is a relaxation system.
                \item  $H(s)$ admits the form
        \begin{IEEEeqnarray*}{rCl}
        H(s) &=& G_0 + \frac{G_1}{s} + \sum_{i=2}^n \frac{G_i}{s + \lambda_i},
        \end{IEEEeqnarray*}
        where $G_i = G_i\tran \succeq 0$ for all $i$ and $0 \leq \lambda_0 < \lambda_1 < \ldots
        < \lambda_N$, for some $N \in \mathbb{Z}_{\geq 0}$.
        \item 
        $H(s)$ admits a minimal state space realization $(A_1, B_1, C_1, D_1)$ such that
        \begin{IEEEeqnarray*}{rCl}
        A_1 &=& A_1\tran \preceq 0\\
        B_1 &=& C_1\tran \\
        D_1 &=& D_1\tran \succeq 0.
        \end{IEEEeqnarray*}
        \item $D  \succeq 0$,
        \begin{IEEEeqnarray*}{lCl}
                \begin{pmatrix} CB & CAB & \ldots & CA^{n-1}B \\
                              CAB & CA^2 B & \ldots & CA^n B \\
                              \vdots & \vdots & \ddots & \vdots \\
                              CA^{n-1}B & CA^nB & \hdots & CA^{2n-2}B 
                \end{pmatrix} &\succeq& 0\\
                \begin{pmatrix} CAB & CA^2 B & \ldots & CA^n B \\
                               CA^2B & CA^3 B & \ldots & CA^{n+1} B\\
                              \vdots & \vdots & \ddots & \vdots \\
                              CA^{n}B & CA^{n+1}B & \hdots & CA^{2n-1}B 
                \end{pmatrix} &\preceq& 0,
        \end{IEEEeqnarray*}
        and all three of these matrices are symmetric.
        \end{enumerate}
\end{theorem}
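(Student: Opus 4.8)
The plan is to prove \Cref{thm:relaxation_state_space} by establishing a cycle of implications among the four conditions, choosing an ordering that exploits the structure we can most readily manipulate. I would prove $(1) \Leftrightarrow (2)$, then $(2) \Rightarrow (3)$, then $(3) \Rightarrow (4)$, and finally $(4) \Rightarrow (1)$, closing the loop. The conceptual heart of the argument is the classical Bernstein--Widder theorem, which states that a scalar function is completely monotonic on $[0,\infty)$ if and only if it is the Laplace transform of a nonnegative measure. The matrix-valued, symmetric version of this says that $g(t) = C e^{At} B$ is completely monotonic with $g(t) = g(t)\tran$ precisely when it is the Laplace transform of a positive-semidefinite matrix-valued measure supported on $[0,\infty)$.

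First I would treat $(1) \Leftrightarrow (2)$. Since the system is finite-dimensional with rational transfer function, the poles of $H(s)$ are real and nonnegative exactly when $g$ is a finite sum of decaying (or constant) exponentials $\sum_i G_i e^{-\lambda_i t}$ with $\lambda_i \geq 0$. Taking Laplace transforms gives the partial fraction form in $(2)$, with the feedthrough $G_0 = D$. The content is that complete monotonicity together with symmetry forces each residue matrix $G_i$ to satisfy $G_i = G_i\tran \succeq 0$: by the Bernstein--Widder representation, the representing measure is a positive-semidefinite combination of point masses at the $\lambda_i$, and each mass is exactly $G_i$. Conversely, any $H(s)$ of the form in $(2)$ has impulse response $g(t) = \sum_i G_i e^{-\lambda_i t}$, whose $k$-th derivative is $(-1)^k \sum_i \lambda_i^k G_i e^{-\lambda_i t}$, manifestly sign-definite with the stated signs, establishing complete monotonicity.

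Next, $(2) \Rightarrow (3)$ is a realization-theoretic step: from the partial fraction expansion I would build a symmetric realization by factoring each $G_i = L_i L_i\tran$ (possible since $G_i \succeq 0$) and assembling block-diagonal $A_1 = \operatorname{diag}(-\lambda_i I)$ with $B_1 = C_1\tran$ formed from the $L_i$, after which minimality follows from distinctness of the $\lambda_i$. The implication $(3) \Rightarrow (4)$ is a direct computation: under $B_1 = C_1\tran$ and $A_1 = A_1\tran$, one has $C_1 A_1^{j} B_1 = C_1 A_1^{j} C_1\tran$, so the block-Hankel matrices in $(4)$ are Gram matrices of the form $M\tran \operatorname{diag}(\pm A_1^{j}) M$; since $A_1 \preceq 0$, the even-shift matrix is a positive-semidefinite Gram matrix and the odd-shift matrix (carrying one extra factor of $A_1 \preceq 0$) is negative-semidefinite, and symmetry is immediate from $A_1 = A_1\tran$. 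The closing implication $(4) \Rightarrow (1)$ recovers symmetry and complete monotonicity of $g$ from semidefiniteness of the Markov-parameter Hankel matrices; here I would invoke a classical moment/Hamburger-type criterion (the matrix analogue of the Hankel-determinant characterization in \autocite{Aissen1951, Bernstein1928}) to conclude that the sequence of Markov parameters $C A^{j} B$ is a positive-semidefinite moment sequence of a measure on $[0,\infty)$, which is equivalent to complete monotonicity of $g$.

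The main obstacle I anticipate is the \emph{matrix-valued} bookkeeping in the two moment-theoretic steps, $(1)\Leftrightarrow(2)$ and $(4)\Rightarrow(1)$. The scalar Bernstein--Widder and Hankel-moment theorems are standard, but lifting them to symmetric, positive-semidefinite matrix measures requires care: one must ensure that complete monotonicity of the matrix function $g$ (an entrywise/definiteness condition) corresponds to a single positive-semidefinite representing measure rather than merely to entrywise representations, and that the two-sided semidefiniteness pattern of the even- and odd-shifted block-Hankel matrices is exactly equivalent to the support of that measure lying in $[0,\infty)$. Diagonalizing is not available in general because the residues need not commute, so I would argue via the quadratic forms $v\tran g(t) v$ for fixed $v \in \R^m$, apply the scalar theory to each such form, and then recombine using polarization and the symmetry constraint to recover the matrix-level semidefiniteness. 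Managing this reduction cleanly, rather than the individual implications, is where the real work lies.
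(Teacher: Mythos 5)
The paper itself does not prove Theorem~\ref{thm:relaxation_state_space}: it presents the four conditions as a collection of known equivalences and discharges them by citation (Meixner, Willems, Bernstein, Widder, Aissen et al.), so there is no in-paper argument to measure yours against. Your outline is, in substance, the classical proof found in those references: Bernstein--Widder for $(1)\Leftrightarrow(2)$, the factorization $G_i = L_iL_i\tran$ assembled into a block-diagonal internally symmetric realization for $(2)\Rightarrow(3)$, and the Gram identity $C_1A_1^{i+j}C_1\tran = (A_1^{i}C_1\tran)\tran(A_1^{j}C_1\tran)$ for $(3)\Rightarrow(4)$. Those three steps are sound as sketched (modulo a sign slip: the poles $-\lambda_i$ are real and \emph{nonpositive}), and your polarization device via the quadratic forms $v\tran g(t)v$ is the right tool for the matrix-valued bookkeeping you identify as the main burden.

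The step that needs more than you give it is $(4)\Rightarrow(1)$. First, condition $(4)$ supplies only two block-Hankel matrices of fixed size $n$, whereas complete monotonicity requires the corresponding semidefiniteness at every order; you must bootstrap the finite conditions to the infinite family, e.g.\ via the Cayley--Hamilton recurrence satisfied by the Markov parameters $CA^kB$. A cleaner closure of your cycle is $(4)\Rightarrow(3)$: factor the nonnegative block-Hankel matrix as $W\tran W$ and read off a symmetric, negative-semidefinite realization on the column space of $W$, avoiding moment theory entirely. Second, even granting a representing measure $\nu$ on $(-\infty,0]$ whose moments match $CA^kB$ up to order $2n-1$, you still owe the argument that the Laplace transform of $\nu$ coincides with $g$ itself rather than merely matching its first $2n$ Taylor coefficients at the origin; this can be closed because both functions satisfy constant-coefficient linear ODEs of bounded order and agree to sufficiently high order at $t=0$, but it does not follow from moment-matching alone and should be stated explicitly.
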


\subsection{Cyclic monotonicity}

In this section, we introduce the notions of monotonicity and cyclic monotonicity,
for operators on a Hilbert space $\hil$.

\begin{definition}
        Given an operator $A: \hil \to \hil$, the \emph{graph of $A$} is the set
        $\gra{A} \subseteq \hil \times \hil$ defined by
        \begin{IEEEeqnarray*}{+rCl+x*}
                \gra{A} &:=& \{ (u, y) \; | \; u \in \hil, y = A(u) \}. & \qedhere
        \end{IEEEeqnarray*}
\end{definition}
\begin{definition}
        An operator $A: \hil \to \hil$ is said to be
        \emph{monotone} if, for all $u_1, u_2 \in \hil, y_1 = A(u_1), y_2 = A(u_2)$,
        \begin{IEEEeqnarray}{rCl}
        \ip{u_1 - u_2}{y_1 - y_2} \geq 0.\label{eq:monotone}
        \end{IEEEeqnarray}
        If $\gra{A}$ is not properly contained within the graph of any other
        monotone operator, $A$ is said to be \emph{maximal monotone}.
\end{definition}

\begin{definition}
        An operator $A: \hil \to \hil$ is said to be
        \emph{$n$-cyclic monotone} if, for all sets of input/output pairs $\{(u_i,
        y_i) \; | \; u_i \in \hil, y_i = A(u_i), i = 0, \ldots, n\}$, 
        \begin{IEEEeqnarray*}{rCl}
        \ip{y_0}{u_0 - u_1} + \ip{y_1}{u_1 - u_2} + \ldots + \ip{y_n}{u_n - u_0} \geq 0.
        \end{IEEEeqnarray*}
        If $A$ is $n$-cyclic monotone  for all $n \geq 1$, $A$ is said to be
        \emph{cyclic monotone}.  If $\gra{A}$ is not contained within the
        graph of any other monotone operator, $A$ is said to be \emph{maximal
        cyclic monotone}.
\end{definition}

Maximality is guaranteed for continuous operators \autocite[Cor.
20.25]{Bauschke2011}, so the Hankel operators associated with  the  stable linear operators considered in 
this paper are automatically maximal.

\begin{definition}
        An operator $A: \hil \to \hil$ is said to be self-adjoint if, for all $u, y
        \in \hil$,
        \begin{IEEEeqnarray*}{+rCl+x*}
                \ip{A (u)}{y} &=& \ip{u}{A (y)}. & \qedhere
        \end{IEEEeqnarray*}
\end{definition}

\textcite{Asplund1970} gives the following characterization of the cyclic
monotonicity of a linear operator.  Given a linear operator $A: \hil \to \hil$, we define
the \emph{complexification of $A$}, denoted $A_c$, by
\begin{IEEEeqnarray*}{rCl}
        A_c(u + jw) := A(u) + jA(w).
\end{IEEEeqnarray*}
This operates on the complexification of $\hil$, denoted $\hil_c$. We endow this
space with the inner product
\begin{IEEEeqnarray*}{rCl}
        \ip{u + jw}{y + jv}_c := \ip{u}{y} + \ip{w}{v} + j(\ip{w}{y} - \ip{u}{v}).
\end{IEEEeqnarray*}
The \emph{numerical range} of an operator $A_c$ on $\hil_c$ is defined as
\begin{IEEEeqnarray*}{rCl}
W(A_c) := \left\{\frac{\ip{A_c (z)}{z}_c}{\norm{z}}\; \middle|\; z \in \dom{A_c}, \norm{z} \neq
0\right\}.
\end{IEEEeqnarray*}

\begin{theorem}[\protect{\textcite[Thm. 3]{Asplund1970}}]
         A linear operator $A$ on $\hil$ is $n$-cyclic monotone if and only if, for
         all $z \in W(A_c)$, $\arg{z} \leq \pi/n$.
\end{theorem}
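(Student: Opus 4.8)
The plan is to prove Asplund's characterization by reducing the $n$-cyclic monotonicity condition to a spectral/numerical-range statement about the complexification $A_c$. First I would unpack the definition of $n$-cyclic monotonicity for a \emph{linear} operator. For a linear $A$, each output is $y_i = A(u_i)$, so the cyclic sum $\sum_{i=0}^{n} \ip{A(u_i)}{u_i - u_{i+1}}$ (indices mod $n+1$) is a quadratic form in the vectors $u_0, \ldots, u_n$. The key structural observation is that this sum is invariant under the cyclic shift $u_i \mapsto u_{i+1}$, which suggests diagonalizing the cyclic structure via a discrete Fourier transform. Concretely, I would substitute a Fourier-type ansatz $u_i = \mathrm{Re}(\omega^i z)$ for a root of unity $\omega = e^{2\pi j/(n+1)}$ and a single complex vector $z \in \hil_c$, and show that testing against such configurations both suffices (because the worst case is attained there) and connects the real cyclic sum to the real part of $e^{j\theta}\ip{A_c(z)}{z}_c$ for an appropriate phase $\theta$.

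The central computation is to show that the cyclic sum, evaluated on a ``single-frequency'' configuration, reduces to something proportional to $\mathrm{Re}\big(c_n\, \ip{A_c(z)}{z}_c\big)$ where $c_n$ is a complex constant whose argument is exactly $\pi/n$ (or $\pi(n-1)/n$, up to the sign conventions in the definition). The telescoping/shift structure of the sum means the cross terms $\ip{A(u_i)}{u_{i+1}}$ depend only on the index difference, so after Fourier diagonalization the whole form collapses to a weighted sum over the eigenvalues of the cyclic difference operator $I - S$ (where $S$ is the shift). The nonnegativity of the cyclic sum for \emph{all} configurations is then equivalent to nonnegativity of $\mathrm{Re}(c_n \ip{A_c(z)}{z}_c)$ for all $z$, which is exactly the statement that every point $w = \ip{A_c(z)}{z}_c/\norm{z}^2 \in W(A_c)$ satisfies $\mathrm{Re}(c_n w) \geq 0$, i.e. $\lvert \arg w \rvert \leq \pi/n$.

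For the direction showing that the numerical-range condition suffices, I would argue that the cyclic sum over arbitrary real vectors decomposes as a sum of contributions, each a positive combination of terms of the form $\mathrm{Re}(c_k w)$ with $w \in W(A_c)$ and $k \leq n$; since $\arg w \leq \pi/n$ controls all these terms simultaneously, nonnegativity follows. For the converse, I would construct an explicit near-worst-case configuration from any chosen $z$ realizing a numerical-range point with large argument, showing that if some $w \in W(A_c)$ had $\arg w > \pi/n$ then the cyclic sum would be strictly negative, violating $n$-cyclic monotonicity.

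The hard part will be pinning down the exact correspondence between the combinatorial cyclic sum and the numerical range, in particular getting the phase bookkeeping right so that the threshold comes out to precisely $\pi/n$ rather than an off-by-one angle, and justifying that it suffices to test on single-frequency (rank-one in the Fourier basis) configurations. The sign convention in the definition of $n$-cyclic monotonicity given here (the sum runs over indices $0, \ldots, n$, giving $n+1$ points) must be tracked carefully against Asplund's original indexing; I would verify the base case $n = 1$ reduces to ordinary monotonicity and $\arg w \leq \pi$, i.e. $\mathrm{Re}(w) \geq 0$, to calibrate the constant $c_n$ before committing to the general phase argument.
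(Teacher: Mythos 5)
The paper does not prove this statement at all: it is quoted directly from Asplund and used as a black box (only the limiting Corollary~\ref{cor:cyclic_adjoint} is derived from it). So there is no in-paper proof to compare against; what you have written is a genuine proof sketch of the cited result, and its strategy is essentially the standard one (and close to Asplund's own): Fourier-diagonalize the cyclic shift. Your plan checks out. Writing $N$ for the number of points in the cycle and $\omega = e^{2\pi j/N}$, the bilinear cyclic sum decomposes over frequencies as a sum of terms $\mathrm{Re}\bigl((1-\omega^k)\,q_k\bigr)$ with $q_k = \ip{A_c \hat u_k}{\hat u_k}_c$, and since $1-\omega^k = 2\sin(k\pi/N)\,e^{j(k\pi/N - \pi/2)}$, each term is nonnegative for all $k=1,\dots,N-1$ exactly when every numerical-range point lies in the sector $\lvert \arg q\rvert \leq \pi/N$; the binding frequencies are $k=1$ and $k=N-1$, which also furnish the worst-case configurations for the converse. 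This settles the ``hard part'' you flag about single-frequency sufficiency: the general configuration is not merely dominated by single-frequency ones, its cyclic sum literally splits into a sum of single-frequency contributions, each individually controlled by the sector condition.

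One substantive caution, which you correctly anticipate: the indexing. With the paper's definition ($n+1$ points $u_0,\dots,u_n$), the computation above yields the sector half-angle $\pi/(n+1)$, not $\pi/n$; the stated $\pi/n$ matches Asplund's convention in which ``$n$-cyclic'' involves $n$ points. Your proposed calibration via the base case is the right check, but note that $n=1$ is degenerate for this purpose (with two points the cycle has $N=2$ and $1-\omega = 2$, recovering ordinary monotonicity, i.e.\ $\mathrm{Re}\,q\ge 0$, which is $\lvert\arg q\rvert \le \pi/2 = \pi/N$ --- consistent with the $\pi/N$ count, not with $\pi/n = \pi$). So calibrate on $N=3$ rather than $N=2$. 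None of this affects the way the theorem is used in the paper, since Corollary~\ref{cor:cyclic_adjoint} only needs the limit over all $n$.
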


For the limiting case of cyclic monotonicity, we have the following corollary.

\begin{corollary}\label{cor:cyclic_adjoint}
        A linear operator $A$ on $\hil$ is cyclic monotone if and only if it is
        self-adjoint and, for all $u \in \dom{A}$, $\ip{A(u)}{u} \geq 0$.
\end{corollary}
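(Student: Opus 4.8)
The plan is to derive Corollary~\ref{cor:cyclic_adjoint} as the limiting case of Asplund's theorem (Theorem~\ref{thm:relaxation_state_space}'s predecessor, the Asplund result just stated) by letting $n \to \infty$. Cyclic monotonicity means $A$ is $n$-cyclic monotone for every $n \geq 1$. By Asplund's theorem, $n$-cyclic monotonicity is equivalent to the condition that every $z \in W(A_c)$ satisfies $\arg{z} \leq \pi/n$. The key observation is that $A$ is cyclic monotone if and only if this holds for all $n$ simultaneously, which forces $\arg{z} \leq \inf_{n \geq 1} \pi/n = 0$ for every $z \in W(A_c)$. So the whole proof reduces to showing that the geometric condition ``$\arg{z} = 0$ for all $z \in W(A_c)$'' is equivalent to the conjunction ``$A$ self-adjoint and $\ip{A(u)}{u} \geq 0$ for all $u$.''

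First I would unpack what $\arg{z} \leq 0$ means. Since the numerical range is taken with $z$ ranging over nonzero elements and the argument is understood modulo the usual branch, $\arg{z} = 0$ should be read as $z$ being real and nonnegative; I expect the paper's convention is that $\arg{z} \in [0, \pi]$ parametrizes the upper half plane so that monotonicity ($n=1$) corresponds to $\arg{z} \leq \pi/2$, i.e. $\Re z \geq 0$. Under this reading, the limiting condition $\arg{z} \leq 0$ collapses to $z \in \R_{\geq 0}$: every element of $W(A_c)$ is a nonnegative real number. So the task is to show that $W(A_c) \subseteq \R_{\geq 0}$ is equivalent to self-adjointness together with nonnegativity of the quadratic form.

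Next I would compute $\ip{A_c(z)}{z}_c$ explicitly for $z = u + jw$ using the given complex inner product. Expanding $\ip{A(u) + jA(w)}{u + jw}_c$ via the definition $\ip{u+jw}{y+jv}_c = \ip{u}{y} + \ip{w}{v} + j(\ip{w}{y} - \ip{u}{v})$, the real part becomes $\ip{A(u)}{u} + \ip{A(w)}{w}$ and the imaginary part becomes $\ip{w}{A(u)} - \ip{u}{A(w)} = \ip{A^*(w)}{u} - \ip{u}{A(w)}$, which measures the failure of self-adjointness. Requiring this quantity to be real for all $z$ forces the imaginary part $\ip{A(u)}{w} - \ip{u}{A(w)}$ (taking real $\hil$ pairings) to vanish for all $u, w$, which is precisely self-adjointness of $A$; and requiring it to be nonnegative then reduces, once the cross terms vanish, to $\ip{A(u)}{u} + \ip{A(w)}{w} \geq 0$ for all $u,w$, which is equivalent to $\ip{A(u)}{u} \geq 0$ for all $u$. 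Conversely, if $A$ is self-adjoint and positive semidefinite, the imaginary part vanishes and the real part is nonnegative, so $W(A_c) \subseteq \R_{\geq 0}$ and $\arg{z} = 0 \leq \pi/n$ for every $n$.

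The main obstacle I anticipate is the careful bookkeeping of the sign conventions and branch of $\arg$ in the complexified inner product, together with justifying that the pointwise limit $\inf_n \pi/n = 0$ can be pushed through the quantifier ``for all $z \in W(A_c)$'' — that is, that cyclic monotonicity (all $n$ at once) really does correspond to the single limiting geometric constraint rather than to a subtler closure condition on $W(A_c)$. Since $W(A_c)$ is a fixed set independent of $n$, this interchange is immediate: $z$ satisfies $\arg z \le \pi/n$ for all $n$ iff $\arg z = 0$, so no uniformity issue arises. I would therefore keep this argument brief and devote the bulk of the proof to the explicit expansion of the complexified quadratic form, from which both self-adjointness and positivity drop out transparently.
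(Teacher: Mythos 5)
Your proposal is correct and follows essentially the same route as the paper: apply Asplund's theorem for all $n$ to conclude $\arg z = 0$ on $W(A_c)$, then expand $\ip{A_c(z)}{z}_c$ for $z = u + jw$ so that vanishing of the imaginary part gives self-adjointness and nonnegativity of the real part gives $\ip{A(u)}{u} \geq 0$. Your additional remarks on the branch of $\arg$ and the quantifier interchange are sound but not needed beyond what the paper's own (terser) proof already records.
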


\begin{proof}
       $n$-cyclic monotonicity for all $n$ implies that $\arg{z} = 0$ for all $z \in
       W(A_c)$.  Equivalently, $\arg{\ip{A_c (z)}{z}} = 0$ for all $z = u + jw \in 
       \dom{A_c}, \norm{z} \neq 0$.  Expanding the inner product:
       \begin{IEEEeqnarray*}{+rCl+x*}
               \arg(\ip{u}{A(u)} + \ip{w}{A(w)} +&&\\
               j(\ip{w}{A(u)} - \ip{u}{A(w)})) &=& 0\\
               \text{so } \ip{u}{A(u)} + \ip{w}{A(w)} &\geq& 0\\
       \text{and } \ip{A(w)}{u} = \ip{w}{A(u)}.&& & \qedhere
       \end{IEEEeqnarray*}
\end{proof}

\begin{definition}
        A function $f: \hil \to \R\cup\{\infty\}$ is said to be \emph{proper} if its value is
        never $-\infty$ and is finite somewhere, \emph{closed} if
        its epigraph is closed and \emph{convex} if, for all $x, y \in \hil$
        and $\theta \in (0, 1)$, 
        \begin{IEEEeqnarray*}{+rCl+x*}
                f(\theta x + (1-\theta)y) &\leq& \theta f(x) + (1-\theta)f(y).&\qedhere
        \end{IEEEeqnarray*}
\end{definition}

Our interest in cyclic monotonicity stems from the following theorem of Rockafellar.
\begin{theorem}[Rockafellar's theorem \protect{\autocite{Rockafellar1966, Rockafellar1970b}}]
        A continuous operator $A: \hil \to \hil$ is maximal cyclic monotone if and only if it is the
        gradient of a closed, convex and proper function from $\hil$
        to $(-\infty, \infty]$.  Moreover, this function is uniquely determined by
        $A$ up to an additive constant.
\end{theorem}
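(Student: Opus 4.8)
The plan is to prove the two implications separately, using Rockafellar's explicit construction for the nontrivial direction, and then to settle uniqueness.

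For the easy direction, suppose $A = \nabla f$ with $f$ closed, convex and proper. Given any pairs $(u_i, y_i) \in \gra{A}$, $i = 0, \ldots, n$, I would invoke the subgradient inequality $f(u_{i+1}) - f(u_i) \geq \ip{y_i}{u_{i+1} - u_i}$, read cyclically with $u_{n+1} := u_0$. Summing around the cycle, the left-hand side telescopes to zero, leaving $\sum_{i=0}^n \ip{y_i}{u_{i+1} - u_i} \leq 0$, which is exactly the $n$-cyclic monotonicity inequality of the definition after negation. Since this holds for every $n$, $A$ is cyclic monotone, and maximality is inherited from the maximal monotonicity of $\partial f$.

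For the converse I would use Rockafellar's construction. Fixing a base pair $(u_0, y_0) \in \gra{A}$, define
\[
f(u) := \sup \left\{ \ip{y_n}{u - u_n} + \sum_{i=1}^n \ip{y_{i-1}}{u_i - u_{i-1}} \right\},
\]
the supremum being over all $n \geq 0$ and all finite chains $(u_1, y_1), \ldots, (u_n, y_n) \in \gra{A}$. As a pointwise supremum of affine functions of $u$, $f$ is automatically closed and convex and takes values in $(-\infty, \infty]$. The delicate point---and precisely where cyclic monotonicity rather than plain monotonicity is essential---is properness. Evaluating at $u = u_0$, the empty chain contributes $0$, while cyclic monotonicity applied to the cycle $u_0, u_1, \ldots, u_n$ shows every chain contributes at most $0$; hence $f(u_0) = 0$ is finite and $f$ is proper. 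I would then verify $\gra{A} \subseteq \gra{\partial f}$: for any $(u, y) \in \gra{A}$ and arbitrary $v \in \hil$, appending $(u, y)$ to a near-optimal chain for $f(u)$ yields an admissible chain for $f(v)$, giving $f(v) \geq f(u) + \ip{y}{v - u}$, that is, $y \in \partial f(u)$.

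Since $f$ is convex, $\partial f$ is itself cyclic monotone by the easy direction and it contains $\gra{A}$, so maximal cyclic monotonicity of $A$ forces $\gra{A} = \gra{\partial f}$. Continuity of $A$ then makes $\partial f$ single-valued on all of $\hil$, so $f$ is differentiable with $A = \nabla f$. For uniqueness, if $A = \nabla f = \nabla g$ for two closed proper convex functions then $\partial f = \partial g$, and integrating the common subgradient along segments (or appealing to the conjugacy correspondence) shows $f - g$ is constant. The main obstacle is the converse construction, and within it the properness argument: showing that cyclic monotonicity alone guarantees finiteness of the supremum at the base point is exactly what fails for merely monotone operators, and it is the technical heart of the theorem.
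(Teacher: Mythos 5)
The paper does not prove this statement: it is imported verbatim as a classical result, with the proof delegated to the cited works of Rockafellar. So there is no internal proof to compare against; what you have written is a reconstruction of Rockafellar's original argument, and it is essentially correct. The easy direction (telescoping the subgradient inequality around a cycle) matches the paper's sign convention for $n$-cyclic monotonicity after negation, and the construction of $f$ as a supremum of affine functions built from finite chains in $\gra{A}$, with properness secured at the base point $u_0$ by cyclic monotonicity, is exactly the technical heart of the theorem as you say. Two small points are worth making explicit. First, in the easy direction you attribute maximality to the maximal monotonicity of $\partial f$, which is itself a nontrivial theorem of Rockafellar; for the operators in this statement (continuous and defined on all of $\hil$) maximality follows more cheaply from the fact the paper itself invokes, namely that continuous everywhere-defined monotone operators are automatically maximal. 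Second, in the converse you should note that $\gra{A} \subseteq \gra{\partial f}$ together with $A$ being everywhere defined forces $f$ to be finite-valued on all of $\hil$, which is what licenses the passage from single-valuedness of $\partial f$ to genuine (Gateaux, and by continuity of $A$, Fr\'echet) differentiability with $\nabla f = A$. With those clarifications your argument is a faithful and complete account of the result the paper cites.
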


\section{Relaxation and cyclic monotonicity}\label{sec:relaxation_cyclic}

In this section, we establish the relationship between relaxation systems and cyclic
monotone operators, and add a fifth equivalence to
Theorem~\ref{thm:relaxation_state_space}: relaxation is equivalent to cyclic
monotonicity of the Hankel operator.  The following theorem generalizes
\autocite[Cor. 1.2]{Yafaev2015} to multiple input, multiple output operators,
assuming a finite-dimensional state space realization.

\begin{theorem}\label{thm:relaxation_cyclic}
        Consider the system \eqref{eq:system} and assume that $A$ is Hurwitz.  The
        system is a relaxation system if and only if its Hankel operator $\Gamma_h$ is cyclic monotone and $D = D\tran \succeq
        0$.
\end{theorem}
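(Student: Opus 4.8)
The plan is to deduce the result from Corollary~\ref{cor:cyclic_adjoint}, which reduces cyclic monotonicity of the (linear, continuous) Hankel operator to two conditions: self-adjointness and positive semidefiniteness. First I would observe that the feedthrough $D$ enters $\Gamma_h$ only through $D\delta(t+\tau)$, supported on the measure-zero set $t=\tau=0$; hence $\Gamma_h = \Gamma_g$ depends only on $g(t)=Ce^{At}B$. Since $D = D\tran \succeq 0$ is assumed on both sides of the equivalence, it suffices to match self-adjointness and positivity of $\Gamma_g$ against the symmetry and complete monotonicity of $g$. I would also reduce to a minimal realization, since both $g$ and $\Gamma_g$ depend only on the controllable and observable part. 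The workhorse is the factorization $\Gamma_g = \mathcal{O}\mathcal{R}$, where $\mathcal{R}u = \int_0^\infty e^{A\tau}Bu(\tau)\dd{\tau} \in \R^n$ is the reachability map and $(\mathcal{O}\xi)(t) = Ce^{At}\xi$ is the observability map, whose adjoint is $\mathcal{O}^* u = \int_0^\infty e^{A\tran t}C\tran u(t)\dd{t}$.

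For the forward direction I would invoke Theorem~\ref{thm:relaxation_state_space}(3) to obtain a minimal realization with $A = A\tran$, $B = C\tran$ and $D = D\tran \succeq 0$. In these coordinates $\mathcal{R} = \mathcal{O}^*$, so $\Gamma_g = \mathcal{O}\mathcal{O}^*$ is manifestly self-adjoint and positive semidefinite, and Corollary~\ref{cor:cyclic_adjoint} gives cyclic monotonicity at once.

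For the converse I would first use self-adjointness. A direct computation shows $\Gamma_g^* = \Gamma_{g\tran}$, so self-adjointness forces $\Gamma_{g - g\tran} = 0$; since a finite-dimensional stable kernel is recovered from its Hankel operator, this yields $g(t) = g(t)\tran$, i.e. $H(s) = H(s)\tran$, the required symmetry. Consequently $(A\tran, C\tran, B\tran, D)$ is a second minimal realization of $H$, and uniqueness of minimal realizations up to similarity produces an invertible $T$ with $A\tran = TAT^{-1}$, $C\tran = TB$ and $B\tran = CT^{-1}$; combining these and using controllability gives $T = T\tran$, $TA = A\tran T$ and $C = B\tran T$. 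The key computation is then $\mathcal{R}\mathcal{O} = \int_0^\infty e^{A\tau}BCe^{A\tau}\dd{\tau} = PT$, where $P = \int_0^\infty e^{A\tau}BB\tran e^{A\tran \tau}\dd{\tau} \succ 0$ is the reachability Gramian, using $Te^{A\tau} = e^{A\tran \tau}T$. Since the nonzero spectrum of $\Gamma_g = \mathcal{O}\mathcal{R}$ coincides with that of the finite matrix $\mathcal{R}\mathcal{O} = PT$, positivity of $\Gamma_g$ is equivalent to $PT$ having nonnegative eigenvalues; as $PT$ is similar to the symmetric matrix $P^{1/2}TP^{1/2}$, whose inertia equals that of $T$ by congruence, this holds if and only if $T \succ 0$. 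Writing $T = R\tran R$ and applying the similarity $R$ then yields a minimal realization with $A = A\tran$ (automatically $\prec 0$ since $A$ is Hurwitz), $B = C\tran$ and $D = D\tran \succeq 0$, which is exactly Theorem~\ref{thm:relaxation_state_space}(3); hence the system is a relaxation system.

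I expect the main obstacle to be the converse, and within it the passage from operator positivity of $\Gamma_g$ on $L_2^m$ to definiteness of the finite reciprocity matrix $T$: one must \emph{(i)} correctly extract $T$ from uniqueness of minimal realizations, \emph{(ii)} establish the identity $\mathcal{R}\mathcal{O} = PT$, and \emph{(iii)} justify that the self-adjoint operator $\Gamma_g$ and the nonsymmetric matrix $PT$ share the same nonzero eigenvalues, so that the congruence argument applies. The remaining steps, namely self-adjointness $\Leftrightarrow$ symmetry and the forward direction, are comparatively routine.
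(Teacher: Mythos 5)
Your proof is correct, and while your forward direction and the opening of your converse (self-adjointness of $\Gamma_g$ versus symmetry of $g$, and extraction of the symmetric intertwining matrix $T$, which is \autocite[Lem.~3]{Willems1972a}) line up with the paper's argument, your route from positivity of $\Gamma_g$ to the symmetric realization is genuinely different. The paper proves $T \succeq 0$ by contradiction --- it constructs a past input driving the state to a putative negative eigenvector $x_0$ of $T$ and evaluates $\ip{u}{\Gamma_h u} = x_0\tran T x_0 < 0$ --- and then still needs two more steps: a separate passivity lemma (its Appendix) feeding into \autocite[Thm.~7]{Willems1972a} to obtain an internally symmetric realization with signature matrix $\Sigma_i$, followed by a second contradiction argument to force $\Sigma_i = -I$. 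You instead compute $\mathcal{R}\mathcal{O} = PT$ with $P$ the controllability Gramian, transfer positivity of $\Gamma_g = \mathcal{O}\mathcal{R}$ to the finite matrix $PT$ via the coincidence of the nonzero spectra of $SR$ and $RS$, conclude $T \succ 0$ from the similarity $PT \sim P^{1/2}TP^{1/2}$ and Sylvester's law, and build the realization of Theorem~\ref{thm:relaxation_state_space}(3) directly by factoring $T = R\tran R$. This buys a more self-contained converse that bypasses Willems' internal-symmetry theorem and the appendix lemma entirely, and your derivation of $g(t) = g(t)\tran$ from $\Gamma_g^* = \Gamma_{g\tran}$ avoids the paper's informal use of Dirac inputs (which do not lie in $L_2$); the price is the reliance on the operator-theoretic identity $\sigma(SR)\setminus\{0\} = \sigma(RS)\setminus\{0\}$, which is unproblematic here since $\Gamma_g$ is finite-rank and self-adjoint. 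Both arguments require minimality, which you correctly note can be assumed without loss of generality because $g$, $D$ and $\Gamma_g$ are realization-independent.
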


\begin{proof}
        We begin by showing that relaxation implies cyclic monotonicity of the Hankel
        operator (the condition on $D$ being immediate from the definition of
        relaxation).
        By Corollary~\ref{cor:cyclic_adjoint}, cyclic monotonicity of $\Gamma_h$ is
        equivalent to the following two conditions, for all $u, w \in L_2^m$:
        \begin{IEEEeqnarray}{rCl}
        \ip{\Gamma_h w}{u} &=& \ip{w}{\Gamma_h u}\label{eq:adjoint}\\
        \ip{u}{\Gamma_h u}  &\geq& 0.\label{eq:positive}
        \end{IEEEeqnarray}
        We begin by showing \eqref{eq:adjoint}.  Note that relaxation implies
        reciprocity with respect to $\Sigma_e = I$, and this in turn implies symmetry
        of the impulse responses $h(t)$ and $g(t)$.

        We also note that, for any $u, w \in L_2^m$, 
        \begin{IEEEeqnarray*}{rCl}
                && \int_0^\infty u(t)\tran \left(\int_0^\infty h(t + \tau) w(\tau)
                \dd{\tau}\right)\dd{t}\\
                &=& \int_0^\infty u(t)\tran \left(\int_0^\infty g(t + \tau) w(\tau)
                \dd{\tau}\right)\dd{t}. \IEEEyesnumber \label{eq:impulse} 
        \end{IEEEeqnarray*}
Indeed,
        \begin{IEEEeqnarray*}{rCl}
                && \int_0^\infty u(t)\tran \left(\int_0^\infty h(t + \tau) w(\tau)
                \dd{\tau}\right)\dd{t}\\
                &=& \int_0^\infty u(t)\tran \int_0^\infty C e^{At} e^{A\tau} B w(\tau) + D w(\tau) \delta(t + \tau)
                \dd{\tau}\dd{t}\\
                &=&  \int_0^\infty u(t)\tran \left(\int_0^\infty C e^{At} e^{A\tau} B w(\tau) \dd{\tau}\right) \dd{t}  \\
                && + \int_0^\infty u(t)\tran D \bar w(t) \dd{t}, \IEEEyesnumber
                \label{eq:sum}
        \end{IEEEeqnarray*}
        where
        \begin{IEEEeqnarray*}{rCl}
                \bar{w}(t) &:=& \begin{cases} w(t) & t = 0\\
                                                0 & \text{otherwise.}
                                \end{cases}
        \end{IEEEeqnarray*}
        We then have
                \begin{IEEEeqnarray*}{rCl}
                        \int_0^\infty u(t)\tran D \bar w(t) \dd{t} &=& 0,
                \end{IEEEeqnarray*}
        so \eqref{eq:sum} implies \eqref{eq:impulse}.
        Using symmetry of the inner product, \eqref{eq:adjoint} is equivalent to 
        \begin{IEEEeqnarray*}{rCl}
                \int_0^\infty u(t)\tran \left(\int_0^\infty g(t + \tau) w(\tau)
                \dd{\tau}\right)\dd{t}\\
                = \int_0^\infty w(t)\tran \left(\int_0^\infty g(t + \tau) u(\tau)
                \dd{\tau}\right)\dd{t}. \IEEEyesnumber \label{eq:reciprocal}
        \end{IEEEeqnarray*}

        To show that $g(t) = g(t)\tran$ implies \eqref{eq:reciprocal}, take the left
        hand side of \eqref{eq:reciprocal}, transpose and apply Fubini's theorem:
        \begin{IEEEeqnarray*}{rCl}
                \int_0^\infty u(t)\tran \left(\int_0^\infty g(t + \tau) w(\tau)
                \dd{\tau}\right)\dd{t}\\
                = \int_0^\infty \left(\int_0^\infty w(\tau)\tran g(t + \tau)\tran
                \dd{\tau}\right)\, u(t)
                \dd{t}\\
                = \int_0^\infty \left(\int_0^\infty w(\tau)\tran g(t + \tau)\tran u(t) \dd{t}\right)
                \dd{\tau}\\
                = \int_0^\infty w(t)\tran \left(\int_0^\infty g(t + \tau)u(t) \dd{t}\right)
                \dd{\tau}.
        \end{IEEEeqnarray*}

        We next show that relaxation implies \eqref{eq:positive}. Let $(A_1, B_1, C_1, D_1)$ be a state space
        realization of the form of Theorem~\ref{thm:relaxation_state_space}, 3), with
        impulse response $h(t)$.  Then, using \eqref{eq:impulse},
        \begin{IEEEeqnarray*}{rCl}
        \ip{u}{\Gamma_h u} &=& \int_0^\infty u\tran(t) \int_0^\infty h(t + \tau)
        u(\tau) \dd{\tau} \dd{t}\\\IEEEyesnumber\label{eq:Henk}
                           &=& \int_0^\infty u\tran(t) \int_0^\infty C_1e^{A_1t} e^{A_1
                           \tau} B_1 u(\tau)
                           \dd{\tau}\dd{t}\\
                           &=& \left(\int_0^\infty e^{A_1t} B_1 u(t) \dd{t} \right)\tran \int_0^\infty e^{A_1t} B_1
        u(t) \dd{t}\\
                           &\geq& 0.
        \end{IEEEeqnarray*}
        This establishes that relaxation implies cyclic monotonicity of the Hankel operator.

        We now show the converse, that cyclic monotonicity of the Hankel operator and
        $D = D\tran \succeq 0$ together imply relaxation.
        We begin by showing that \eqref{eq:reciprocal} implies symmetry of
        $g(t)$ for all $t \geq 0$.  Indeed, let 
        $v(\tau) = \delta(\tau)e_j$
        and $u(t) = \delta(t - t_0) e_i$, where $t_0 \in [0, \infty)$, $e_i$ denotes
        the $i^{\text{th}}$ canonical basis vector of $\R^n$ and $\delta$ denotes the Dirac
        delta.  Substituting these signals into \eqref{eq:reciprocal} gives
        \begin{IEEEeqnarray*}{rCl}
        e_i\tran g(t_0) e_j &=& e_j\tran g(t_0) e_i,
        \end{IEEEeqnarray*}
        that is, $g(t_0)$ is symmetric for all $t_0 \in [0, \infty)$, which is
        equivalent to symmetry of $h(t)$ under the assumption $D = D\tran$.  This in
        turn is equivalent to reciprocity with respect to $\Sigma_e = I$.

        Finally, we show that reciprocity, \eqref{eq:positive} and $D = D\tran \succeq 0$ imply relaxation.
        Let  be a stable system with $\hat D = \hat
        D \tran \succeq 0$ and Hankel operator
        $\Gamma_h$ which satisfies \eqref{eq:positive} and \eqref{eq:reciprocal}.
        Let $(\hat A, \hat B, \hat C, D)$ be a minimal system with transfer function
        equal to $D\delta(t) + Ce^{At}B$. 
        By reciprocity, it follows from \autocite[Lem 3]{Willems1972a} that there exists a 
        unique, invertible, symmetric matrix $T$ such that
        \begin{IEEEeqnarray*}{rCl}
        \hat A\tran T &=& T \hat A\\
        T\hat B &=& \hat C\tran.
        \end{IEEEeqnarray*}
        We claim that $T \geq 0$.  Suppose, on the contrary, that $T$ has a negative
        eigenvalue.  Let $x_0$ be a corresponding eigenvector.  Let $\bar{u}:
        (-\infty, 0] \to \R^n$ be an input that drives the system from $x=0$ at
        $t=-\infty$ to $x(0) = x_0$.  Such an input exists, as $(\hat A, \hat B)$ is
        controllable.  Let $u(t) = \bar{u}(-t)$.  By positivity of $\Gamma_h$, we
        have
        \begin{IEEEeqnarray*}{rCl}
        0 &\leq& \ip{u}{\Gamma_h u}\\
          &=& \int_0^\infty u(t)\tran \int_0^\infty \hat Ce^{\hat A(t +
          \tau)}\hat Bu(\tau)\dd{\tau}\dd{t}\\
          &=& \int_0^\infty u(t)\tran \hat Ce^{\hat At} \int_{-\infty}^0 e^{-\hat A\tau} \hat B
          \bar{u}(\tau) \dd{\tau}\dd{t}\\
          &=& \int_0^\infty u(t)\tran \hat C e^{\hat At} x_0 \dd{t}\\
          &=& \int_0^\infty u(t)\tran \hat B\tran T e^{\hat At} x_0 \dd{t}\\
          &=& \int_{-\infty}^0 \bar{u}(t)\tran \hat B\tran e^{-\hat A\tran t} \dd{t} T x_0\\
          &=& x_0\tran T x_0 < 0,
        \end{IEEEeqnarray*}
        which is a contradiction.  Hence $T \geq 0$.  It follows from
        Lemma~\ref{lem:passivity} in the Appendix that the system is passive.  It then follows from
        \autocite[Thm. 7]{Willems1972a} that there exists a minimal realization $(A_1, B_1,
        C_1, D_1)$ of the system which satisfies
        \begin{IEEEeqnarray*}{rCl}
        \Sigma_i A_1 &=& A\tran_1 \Sigma_i\\
        C_1\tran &=& -\Sigma_i B_1\\
        D_1 &=& D_1\tran \succeq 0,
        \end{IEEEeqnarray*}
        where $\Sigma_i$ is a signature matrix.  
        It follows from Equation~\eqref{eq:Henk} and positivity of $\Gamma_h$
        that 
        \begin{IEEEeqnarray}{rCl}
        \int_0^\infty u(t)\tran C_1 e^{At} \dd{t} \int_0^\infty e^{A_1\tau} B_1 u(\tau)
        \dd{\tau} \geq 0\label{eq:contra}
        \end{IEEEeqnarray}
        for all $u$.  Hence
        \begin{IEEEeqnarray*}{rCl}
        -\int_0^\infty u(t)\tran B_1\tran e^{A_1\tran t} \dd{t} \Sigma_i \int_0^\infty
        e^{A_1\tau} B_1 u(\tau) \dd{\tau} \geq 0
        \end{IEEEeqnarray*}
        for all $u$.
        Suppose that $\Sigma_i$ has  entry $(j, j)$ equal to $1$.  By controllability
        of $(A_1, B_1)$, we can choose an input such that
        \begin{IEEEeqnarray*}{rCl}
        \int_0^\infty e^{A_1\tau} B_1 u(\tau) \dd{\tau} = e_j.
        \end{IEEEeqnarray*}
        But then $-e_j\tran \Sigma_i e_j \prec 0$, which contradicts
        \eqref{eq:contra}.  Hence $\Sigma_i = -I$, so the system is of the relaxation
        type. 
\end{proof}

\section{Intrinsic storages for relaxation systems}\label{sec:LTI_storage}

Theorem~\ref{thm:relaxation_cyclic} establishes the equivalence of relaxation and
cyclic monotonicity of the Hankel operator.  It then follows from Rockafellar's theorem
that the Hankel operator is the gradient of a closed, convex and proper functional
mapping $L_2^m \to \R$. It
turns out that this convex functional is precisely the input/output storage
observed by \textcite[$\S10$]{Willems1972a}.

Before formalizing this result, we show that passivity is guaranteed by the existence
of a nonnegative functional of the past input to the system.  We call this object an
\emph{intrinsic storage functional}.
We then give a simple, illustrative example.

\begin{proposition}\label{prop:storage}
         Consider a system of the form \eqref{eq:system}.  Given a signal $u \in L_2(\R,
         \R^m)$ and time $t\in\R$, denote by $u_t$ the truncation of $u$ to the time axis $(-\infty,
         t]$.  If there exists a functional $V$ mapping a truncated signal $u_t$ into
         $\R_{\geq 0}$ and satisfying
         \begin{IEEEeqnarray}{rCl}
                 \td{V}{t}(u_t) \leq u(t)\tran y(t),\label{eq:passive}
         \end{IEEEeqnarray}
        for all $t \in \R$ and input/output trajectories $(u, y)$ of the system, then
        the system is passive.
\end{proposition}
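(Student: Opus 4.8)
The plan is to integrate the pointwise dissipation inequality \eqref{eq:passive} and then exploit nonnegativity of $V$ together with the matching condition built into Definition~\ref{def:passive}. First I would fix an arbitrary input/output trajectory $(u, y)$ and an arbitrary time $t_0 \in \R$, and propose the constant $K := V(u_{t_0})$, which is finite because $V$ takes values in $\R_{\geq 0}$. The key observation is that this $K$ depends only on the prescribed past of the trajectory and not on any future extension.

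Next, for any trajectory $(\hat u, \hat y)$ that agrees with $(u, y)$ on $(-\infty, t_0)$, I would integrate \eqref{eq:passive} along $(\hat u, \hat y)$ from $t_0$ to an arbitrary $t_1 \geq t_0$, obtaining
\begin{IEEEeqnarray*}{rCl}
V(\hat u_{t_1}) - V(\hat u_{t_0}) \leq \int_{t_0}^{t_1} \hat u(t)\tran \hat y(t) \dd{t}.
\end{IEEEeqnarray*}
Rearranging and discarding the nonnegative term $V(\hat u_{t_1}) \geq 0$ gives
\begin{IEEEeqnarray*}{rCl}
-\int_{t_0}^{t_1} \hat u(t)\tran \hat y(t) \dd{t} \leq V(\hat u_{t_0}).
\end{IEEEeqnarray*}
Because $\hat u$ and $u$ coincide on $(-\infty, t_0)$, their truncations agree as elements of $L_2$ (differing at most at the single point $t_0$, which has measure zero), so $V(\hat u_{t_0}) = V(u_{t_0}) = K$. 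This is precisely the bound demanded by Definition~\ref{def:passive}, and since $(\hat u, \hat y)$ and $t_1$ were arbitrary, passivity follows.

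The step I expect to require the most care is the passage from the differential inequality \eqref{eq:passive} to its integrated form, which tacitly requires $t \mapsto V(\hat u_t)$ to be absolutely continuous so that the fundamental theorem of calculus applies; I would handle this by regarding \eqref{eq:passive} as shorthand for the integrated dissipation inequality, or by assuming the mild regularity on $V$ under which $\td{V}{t}$ is meaningful. The only remaining point is to confirm that $K$ is genuinely independent of the extension $(\hat u, \hat y)$, which is exactly what the $L_2$ equivalence-class argument secures.
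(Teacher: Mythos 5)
Your proof is correct and follows essentially the same route as the paper's: integrate the dissipation inequality from $t_0$ to $t_1$, discard the nonnegative terminal storage, and take $K = V(u_{t_0})$. You are in fact slightly more careful than the paper, which integrates along $(u,y)$ rather than the extension $(\hat u, \hat y)$ and does not explicitly note that $V(\hat u_{t_0}) = V(u_{t_0})$ because the truncations coincide; this extra step is needed to match the quantifier structure of Definition~\ref{def:passive}, and your handling of it is a welcome refinement rather than a departure.
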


\begin{proof}
        Let $t_0, t_1 \in \R$, $t_1 \geq t_0$.  Integrating \eqref{eq:passive} from
        $t_0$ to $t_1$ gives
        \begin{IEEEeqnarray*}{rCl}
                V(u_{t_0}) - V(u_{t_1}) \geq -\int_{t_0}^{t_1} u(t)\tran y(t) \dd{t}.
        \end{IEEEeqnarray*}
        Passivity then follows from nonnegativity of $V(u_{t_1})$, with $K$ in
        Definition~\ref{def:passive} equal to $V(u_{t_0})$.
\end{proof}

\begin{example}\label{ex:rc}
        Consider the linear RC circuit shown in Figure~\ref{fig:rc}.  Denoting the
        voltage on the capacitor by $v_c$, we have the following state space model
        for the impedance of the circuit: 
        \begin{IEEEeqnarray*}{rCl}
        \td{}{t} v_c &=& \frac{-1}{R_1 C} v_c + \begin{pmatrix} \frac{1}{C} & \frac{1}{C} \end{pmatrix} \begin{pmatrix} i_1 \\ i_2 \end{pmatrix},\\
        \begin{pmatrix} v_1 \\ v_2 \end{pmatrix} &=& \begin{pmatrix} 1 \\ 1\end{pmatrix} v_2 + \begin{pmatrix} 0 & 0 \\ 0 & R_2 \end{pmatrix} 
                \begin{pmatrix} i_1 \\ i_2 \end{pmatrix}.
        \end{IEEEeqnarray*}
        \begin{figure}[hb]
                \centering
                \includegraphics{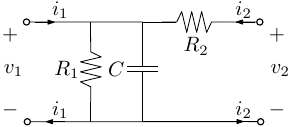}
        \caption{A two-port RC circuit.}
                \label{fig:rc}
        \end{figure}
        We consider the following experiment: time-varying current sources,
        $\bar{i}_{1t}(\cdot)$ and $\bar{i}_{2t}(\cdot)$, are
        attached to the ports from time $-\infty$, when there is no charge on the capacitor, to
        time $t \in \R$.  The current sources are then replaced by voltmeters, which read 
        voltages $\bar{v}_1(\cdot)$ and $\bar{v}_2(\cdot)$.  We define $i_{nt}(\tau) = \bar{i}_{nt}(t - \tau)$ and
        $v_n(\zeta) = \bar{v}_n(\zeta + t)$ for $n = 1,2$. Define $v =
        \begin{pmatrix} v_1 & v_2 \end{pmatrix}\tran$ and $i_t =
        \begin{pmatrix} i_{1t} & i_{2t} \end{pmatrix}\tran$.  Solving the state space
        model gives the Hankel operator
        \begin{IEEEeqnarray*}{rCl}
        v(\zeta) &=&  \int_0^\infty \begin{pmatrix}1 \\ 1\end{pmatrix} e^{\frac{-1}{R_1C}(\zeta + \tau)} \begin{pmatrix} \frac{1}{C} & \frac{1}{C}\end{pmatrix} i_t(\tau) \dd{\tau},
        \end{IEEEeqnarray*}
plus an additional term $R_2 i_{2t}(0)$ when $\zeta = 0$.
        Computing the inner product $(1/2)\ip{i_t}{v}$ over $L_2$ gives
        \begin{IEEEeqnarray*}{lCl}
        \frac{1}{2}\int_0^\infty i_t(\zeta)\tran \int_0^\infty \begin{pmatrix} 1\\ 1\end{pmatrix}
        e^{\frac{-1}{R_1C}(\zeta + \tau)} \begin{pmatrix} \frac{1}{C} & \frac{1}{C}\end{pmatrix} i_t(\tau)\dd{\tau}\dd{\zeta}\\
        = 
        \frac{1}{2C}\left( \int_0^\infty (i_{1t}(\zeta) + i_{2t}(\zeta))e^{\frac{-1}{R_1C}\zeta}\dd{\zeta}\right.\\
         \quad\qquad\left.\int_0^\infty (i_{1t}(\tau) + i_{2t}(\tau))
        e^{\frac{-1}{R_1C}\tau}\dd{\tau}\right)\\
        = \frac{1}{2C} q_c(0)^2,
        \end{IEEEeqnarray*}
        where $q_c = \frac{1}{C} v_c$ is the charge on the capacitor and the last
        line follows by solving the state space equations with zero initial
        condition.
        This expression is the energy stored in the capacitor at time $\tau = 0$.
        Taking the derivative with respect to time gives
        \begin{IEEEeqnarray*}{rCl}
        \td{}{t}\frac{1}{2}\ip{i_t}{v} &=& \frac{1}{C} q_c(0)\td{}{t} q_c(0).
        \end{IEEEeqnarray*}
        Let $\eta(t, \tau) := t - \tau$.  Then
        \begin{IEEEeqnarray*}{*l+rCl}
                &\td{}{t} i_{nt} &=& \td{}{t} \bar{i}_{nt}(\eta(t, \tau)) = \td{\bar{i}_{nt}}{\eta} \td{\eta}{t} =  \bar{i}_{nt}^\prime (t - \tau)\\
                \text{and}& \td{}{\tau} i_{nt} &=& \td{}{\tau} \bar{i}_{nt}(\eta(t, \tau)) = \td{\bar{i}_{nt}}{\eta} \td{\eta}{\tau} = - \bar{i}_{nt}^\prime (t - \tau),
        \end{IEEEeqnarray*}
        so $\td{}{t} i_{nt} =  -\td{}{\tau}
        i_{nt}$. We then have:
        \begin{IEEEeqnarray*}{rCl}
        \td{}{t} q_c(0) &=& \int_0^\infty e^{\frac{-1}{R_1C}\tau} \td{}{t}(i_{1t}(\tau) + i_{2t}(\tau))
        \dd{\tau} \\
                       &=& -\int_0^\infty e^{\frac{-1}{R_1C} \tau} \td{}{\tau} (i_{1t}(\tau) + i_{2t}(\tau))
                               \dd{\tau}.
        \end{IEEEeqnarray*}
        Integrating by parts then gives
        \begin{IEEEeqnarray*}{rCl}
        \td{}{t}q_c(0) &=& -\left[e^{\frac{-1}{R_1C}\tau}(i_{1t}(\tau) + i_{2t}(\tau))\right]_0^\infty \\&&-
        \frac{1}{R_1C}\int_0^\infty e^{\frac{-1}{R_1C}\tau} (i_{1t} + i_{2t})(\tau)\dd{\tau}\\
                   &=& i_{1t}(0) + i_{2t}(0) - \frac{1}{R_1C} q_c(0), \text{ so }\\
        \td{}{t}\frac{1}{2}\ip{i_t}{v} &=& v_c(0)(i_{1t}(0) + i_{2t}(0)) -
        \frac{1}{R_1C^2}q_c(0)^2\\
                                       &\leq& v_c(0) (i_{1t}(0) + i_{2t}(0)) + R_2 i_{2t}(0)^2\\
                                       &=& \bar v(t)\bar{i}_t(t).
        \end{IEEEeqnarray*}
        The variables $\bar v$ and $\bar i_t$ correspond to a particular experiment, however, the
        right hand side of this 
        dissipation inequality only involves the value of $\bar i_t$ and $\bar v$ at
        time $t$, the instant in the experiment when both the current source and the
        voltmeter are connected.  These can thus be considered samples of an
        arbitrary current/voltage trajectory.
        The functional $(1/2)\ip{i_t}{v}$ is thus an intrinsic storage functional for the system,
        and is expressed purely in terms of the input $i$ and output $v$.
        Furthermore, the derivative of this functional with respect to $i_t$ is
        the Hankel operator of the system.  The quantity $(1/R_1C^2)q_c(0)^2$ is the
        instantaneous power dissipated by the resistor $R_1$.
\end{example}

In order to generalize the construction of the intrinsic storage in Example~\ref{ex:rc} to
arbitrary relaxation systems, we require a notion of gradient on $L_2^m$.  This is
given by the functional derivative, $\partial V/\partial u$, which we define via the
first variation:
        \begin{IEEEeqnarray*}{rCl}
                \ip{\pd{V}{u}}{\phi} &:=& \left[\td{}{\epsilon}\left(V(u + \epsilon \phi)\right)\right]_{\epsilon = 0}.
\end{IEEEeqnarray*}

\begin{lemma}\label{lem:gradient}
        Let $h$ be the impulse response of a relaxation system, and 
        $\Gamma_h$ be the corresponding Hankel operator.  Then
        $\Gamma_h$ is the functional derivative of 
        \begin{IEEEeqnarray*}{rCl}
        V(u) := \frac{1}{2}\ip{u}{\Gamma_h u}.
        \end{IEEEeqnarray*}
\end{lemma}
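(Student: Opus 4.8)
The plan is to compute the first variation of $V$ directly from the definition and identify it with $\ip{\Gamma_h u}{\phi}$, the only real ingredient being self-adjointness of $\Gamma_h$. First I would exploit linearity of the Hankel operator to write $V(u + \epsilon\phi) = \frac{1}{2}\ip{u + \epsilon\phi}{\Gamma_h u + \epsilon\Gamma_h\phi}$, which expands as a quadratic polynomial in $\epsilon$ with constant term $\frac{1}{2}\ip{u}{\Gamma_h u}$, linear term $\frac{\epsilon}{2}(\ip{u}{\Gamma_h\phi} + \ip{\phi}{\Gamma_h u})$, and quadratic term $\frac{\epsilon^2}{2}\ip{\phi}{\Gamma_h\phi}$. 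Each inner product here is finite because, for the stable system, $\Gamma_h$ is a continuous operator on $L_2^m$, so this expansion is a genuine smooth function of $\epsilon$ and the $\epsilon$-derivative may be taken termwise.

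Differentiating in $\epsilon$ and setting $\epsilon = 0$ annihilates the constant and quadratic contributions, leaving the first variation
\begin{IEEEeqnarray*}{rCl}
\ip{\pd{V}{u}}{\phi} &=& \frac{1}{2}\left(\ip{u}{\Gamma_h\phi} + \ip{\phi}{\Gamma_h u}\right).
\end{IEEEeqnarray*}
The key step is then to invoke self-adjointness of $\Gamma_h$. Since the system is a relaxation system, Theorem~\ref{thm:relaxation_cyclic} guarantees that $\Gamma_h$ is cyclic monotone, and by Corollary~\ref{cor:cyclic_adjoint} this forces $\Gamma_h$ to be self-adjoint; equivalently, this is exactly condition~\eqref{eq:adjoint} established in the proof of Theorem~\ref{thm:relaxation_cyclic}. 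Hence $\ip{u}{\Gamma_h\phi} = \ip{\Gamma_h u}{\phi}$.

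Combining this with symmetry of the real inner product, $\ip{\phi}{\Gamma_h u} = \ip{\Gamma_h u}{\phi}$, the two terms coincide, the factor $\frac{1}{2}$ cancels, and I obtain $\ip{\pd{V}{u}}{\phi} = \ip{\Gamma_h u}{\phi}$ for every $\phi \in L_2^m$. Since $\phi$ is arbitrary, the functional derivative $\pd{V}{u}$ equals $\Gamma_h u$, which is the claim. I expect essentially no obstacle here: the computation is routine once self-adjointness is in hand, and the only point requiring any care is the interchange of differentiation and inner product, which the continuity of $\Gamma_h$ on $L_2^m$ justifies. The whole content of the lemma is thus carried by the self-adjointness supplied by the relaxation hypothesis through the chain Theorem~\ref{thm:relaxation_cyclic} $\Rightarrow$ cyclic monotonicity $\Rightarrow$ Corollary~\ref{cor:cyclic_adjoint}.
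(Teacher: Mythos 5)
Your proposal is correct and follows essentially the same route as the paper: expand the quadratic form in $\epsilon$, differentiate at $\epsilon = 0$, and use self-adjointness of $\Gamma_h$ (which the paper also invokes, and which you correctly trace back to relaxation via Theorem~\ref{thm:relaxation_cyclic} and Corollary~\ref{cor:cyclic_adjoint}) to combine the two cross terms. Your version merely spells out the termwise differentiation and the continuity justification that the paper leaves implicit.
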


\begin{proof}
Computing the functional derivative gives
        \begin{IEEEeqnarray*}{rCl}
                                    \ip{\pd{V}{u}}{\phi} &=& \frac{1}{2}\left[\td{}{\epsilon} \ip{u + \epsilon \phi}{\Gamma_h(u + \epsilon \phi)}\right]_{\epsilon = 0}\\
                                     &=& \frac{1}{2}\ip{\phi}{\Gamma_h u} + \frac{1}{2}\ip{u}{\Gamma_h \phi}\\
                                     &=& \ip{\Gamma_h u}{\phi},
        \end{IEEEeqnarray*}
        where the final inequality follows from self-adjointness of $\Gamma_h$.  It
        then follows that $\partial V/ \partial u = \Gamma_h$.
\end{proof}

The following theorem establishes that the function of Lemma~\ref{lem:gradient} is in
fact an intrinsic storage functional.

\begin{theorem}\label{thm:linear_dissipation}
        Let $h$ be the impulse response of a relaxation system, and 
        $\Gamma_h$ be the corresponding Hankel operator.  Then 
        the system is passive with intrinsic storage functional
        \begin{IEEEeqnarray*}{rCl}
        V(u) := \frac{1}{2}\ip{u}{\Gamma_h u}.
        \end{IEEEeqnarray*}
\end{theorem}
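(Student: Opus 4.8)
The plan is to verify the two defining properties of an intrinsic storage functional in the sense of Proposition~\ref{prop:storage}, namely nonnegativity of $V$ and the pointwise dissipation inequality $\td{V}{t}(u_t) \leq u(t)\tran y(t)$; passivity then follows for free from that proposition. Nonnegativity is immediate: for a relaxation system the positivity estimate \eqref{eq:positive} established in the proof of Theorem~\ref{thm:relaxation_cyclic} gives $V(u) = \frac12\ip{u}{\Gamma_h u} \geq 0$, and the same bound applied to the truncated, reflected past input shows $V(u_t) \geq 0$ for every $t$.

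The key step is to re-express $V(u_t)$ as a state-space energy. I would fix the internally symmetric minimal realization $(A_1, B_1, C_1, D_1)$ furnished by Theorem~\ref{thm:relaxation_state_space}, item 3, in which $A_1 = A_1\tran \preceq 0$, $C_1 = B_1\tran$ and $D_1 = D_1\tran \succeq 0$. Given a trajectory $(u, y)$, let $x(t)$ denote the state of this realization reached at time $t$ by the past input, starting from rest at $t = -\infty$; reflecting and shifting as in the definition of the Hankel operator gives $x(t) = \int_0^\infty e^{A_1\tau} B_1 u(t - \tau)\dd{\tau}$. Applying the computation \eqref{eq:Henk} to the reflected truncation of $u$ then yields $V(u_t) = \frac12\norm{x(t)}^2$. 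This identification is where the feedthrough must be handled with care, and I expect it to be the main point of the argument: $V$ is built from $\Gamma_h$, whose quadratic form ignores the $D$ term since $D\delta$ acts only at the single instant $\tau = 0$ (this is exactly \eqref{eq:impulse}), whereas the true output $y(t) = C_1 x(t) + D_1 u(t)$ retains the instantaneous $D_1 u(t)$ contribution. Reconciling these two uses of the realization is the crux.

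With the identification in hand, the dissipation inequality is a short computation along trajectories. Differentiating, $\td{}{t} V(u_t) = x(t)\tran \dot{x}(t) = x(t)\tran\bigl(A_1 x(t) + B_1 u(t)\bigr)$, while $C_1 = B_1\tran$ gives $u(t)\tran y(t) = x(t)\tran B_1 u(t) + u(t)\tran D_1 u(t)$. Subtracting, the cross term $x(t)\tran B_1 u(t)$ cancels and I am left with
\begin{IEEEeqnarray*}{rCl}
u(t)\tran y(t) - \td{}{t} V(u_t) &=& u(t)\tran D_1 u(t) - x(t)\tran A_1 x(t),
\end{IEEEeqnarray*}
which is nonnegative because $D_1 \succeq 0$ and $A_1 \preceq 0$. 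Thus $\td{V}{t}(u_t) \leq u(t)\tran y(t)$, as required; here $-x(t)\tran A_1 x(t)$ is the internal dissipation, matching the resistor power $(1/R_1 C^2) q_c(0)^2$ found in Example~\ref{ex:rc}, and the reappearance of $D_1$ as a dissipated-power term is precisely the reconciliation flagged above.

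I would close by noting the only technical point glossed over: along an $L_2$ trajectory the state $x$ is absolutely continuous and satisfies $\dot{x} = A_1 x + B_1 u$ only almost everywhere, so the dissipation inequality holds for almost every $t$. This is enough to invoke Proposition~\ref{prop:storage}, whose conclusion rests on integrating \eqref{eq:passive}, and hence to conclude that the system is passive with $V$ as an intrinsic storage functional.
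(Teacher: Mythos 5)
Your proof is correct, but it takes a genuinely different route from the paper's. The paper stays entirely external: it uses Lemma~\ref{lem:gradient} to write $\td{V}{t}(u_t) = \ip{\Gamma_h u_t}{\partial u_t/\partial t}$, exploits $\partial_t u_t = -\partial_\tau u_t$, integrates by parts in $\tau$ so that the supply rate $y(0)\tran u_t(0)$ emerges as the boundary term, and bounds the remaining term by $-\ip{\Gamma_{(-g^\prime)}u_t}{u_t} \leq 0$ via Lemma~\ref{lem:gradient_cyclic}, which says that $-g^\prime$ is again a relaxation kernel and hence has a positive Hankel operator. You instead pass to the internally symmetric realization of Theorem~\ref{thm:relaxation_state_space}(3), identify $V(u_t) = \tfrac{1}{2}\norm{x(t)}^2$ via \eqref{eq:Henk}, and run the standard storage-function computation, with the dissipation appearing as $u\tran D_1 u - x\tran A_1 x \geq 0$; your handling of the feedthrough (the quadratic form of $\Gamma_h$ ignores $D\delta$ by \eqref{eq:impulse}, while $y$ retains $D_1 u$) is the right reconciliation and your algebra checks out. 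Your version is shorter and makes explicit the claim, stated but not proved in Section~\ref{sec:LTI_storage}, that this convex functional coincides with Willems' internal storage $\tfrac{1}{2}x\tran x$ for the symmetric realization. The paper's version buys something you give up: once cyclic monotonicity of $\Gamma_{(-g^\prime)}$ is granted, the argument never touches a state, and the dissipated power is exhibited as the external quantity $\ip{\Gamma_{(-g^\prime)}u_t}{u_t}$ rather than the internal $-x\tran A_1 x$. That realization-free structure is what the authors need for the nonlinear generalization they advertise in the conclusions, where no symmetric realization is available. Your closing remark about almost-everywhere differentiability along $L_2$ trajectories is a legitimate technical point that the paper itself also glosses over.
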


The proof of Theorem~\ref{thm:linear_dissipation} makes use of the following lemma,
which establishes a recursive property of relaxation systems with respect to the
derivative.  This is a generalization of the fact that the power dissipated by the
resistor $R_1$ in
Example~\ref{ex:rc} is positive.

\begin{lemma}\label{lem:gradient_cyclic}
        Let $g(t) = Ce^{At}B$ be the impulse response of a relaxation
        system, without the direct component $D\delta(t)$.  Then any
        system with impulse response $-\td{}{t} g$ is also a relaxation system.
\end{lemma}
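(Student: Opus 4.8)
The plan is to verify directly that the feedthrough-free impulse response $\tilde g(t) := -\td{}{t} g(t)$ satisfies the conditions of Definition~\ref{def:relaxation}: symmetry of $\tilde g$, complete monotonicity of $\tilde g$, and the (trivial) condition on the direct term. First I would check that $\tilde g$ is a legitimate stable LTI impulse response without feedthrough. Since $g(t) = Ce^{At}B$ is smooth on $[0,\infty)$ with $\td{^k}{t^k} g(t) = CA^k e^{At}B$, we have $\tilde g(t) = -CA e^{At}B$, which is of the form $\tilde C e^{\tilde A t}\tilde B$ with $\tilde A = A$ (still Hurwitz), $\tilde B = B$, and $\tilde C = -CA$. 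In particular $\tilde g$ carries no Dirac component, so the relevant direct term is $\tilde D = 0 \succeq 0$, and it remains only to check the symmetry and alternating-sign conditions on $\tilde g$.

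For symmetry, I would differentiate the identity $g(t) = g(t)\tran$, which holds for all $t \geq 0$ because the original system is a relaxation system. Differentiating once shows that $\td{}{t} g(t)$ is symmetric, hence so is $\tilde g(t) = -\td{}{t} g(t)$ for all $t \geq 0$.

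The essential step is the sign-and-index bookkeeping for complete monotonicity. Differentiating $\tilde g$ gives $\td{^k}{t^k}\tilde g(t) = -\td{^{k+1}}{t^{k+1}} g(t)$, so that $(-1)^k \td{^k}{t^k}\tilde g(t) = (-1)^{k+1}\td{^{k+1}}{t^{k+1}} g(t)$. For each $k \geq 1$ the right-hand side is precisely the order-$(k+1)$ complete monotonicity condition for $g$, with $k+1 \geq 2 \geq 1$, and is therefore positive semidefinite by Definition~\ref{def:relaxation} applied to the original system. Thus $(-1)^k \td{^k}{t^k}\tilde g(t) \succeq 0$ for all $k \geq 1$ and all $t \geq 0$; moreover the $k=1$ condition for $g$ yields $\tilde g(t) = -\td{}{t} g(t) \succeq 0$, so $\tilde g$ is positive semidefinite as well. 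Together with symmetry and $\tilde D = 0$, this shows that the system with impulse response $\tilde g$ is a relaxation system.

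I do not expect a genuine obstacle here: the only point requiring care is that applying $-\td{}{t}$ shifts every alternating-sign inequality up by one order, while the extra minus sign keeps the signs aligned, so that the conditions for the original system at orders $2, 3, \ldots$ furnish exactly the conditions for the new system at orders $1, 2, \ldots$. The fact that Definition~\ref{def:relaxation} imposes the alternation only for $k \geq 1$ (rather than $k \geq 0$) is what lets this index shift close without leaving any separate condition to verify.
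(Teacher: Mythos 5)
Your proof is correct and follows the same route as the paper's, which simply observes that the alternating-sign conditions for $g$ at orders $2,3,\ldots$ become the conditions for $-\td{}{t}g$ at orders $1,2,\ldots$. You spell out the index bookkeeping and additionally verify symmetry and the absence of a feedthrough term, details the paper's one-line proof leaves implicit.
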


\begin{proof}
        By Definition~\ref{def:relaxation}, $g$ is completely monotonic, so 
        \begin{IEEEeqnarray*}{rCl}
                (-1)^k \td{^k}{t^k} g(t) \geq 0
        \end{IEEEeqnarray*}
        for all $k = 1, 2, \ldots$ This implies complete monotonicity of $-\td{}{t}
        g$.
\end{proof}
\begin{proof}[Proof of Theorem~\ref{thm:linear_dissipation}]
        Nonnegativity of $V$ follows from positivity of $\Gamma_h$
        (Theorem~\ref{thm:relaxation_cyclic}).  It remains to show that $V$ satisfies
        the dissipation inequality \eqref{eq:passive}.
        Let the input trajectory be $\bar{u} \in L_{2}(\R, \R^m)$
        and define the past input corresponding to time $t \in \R$ by
        \begin{IEEEeqnarray*}{rCl}
        u_t(\tau) &:=& \bar{u} (t - \tau), \quad \tau \in [0, \infty).
        \end{IEEEeqnarray*}
        Let $\eta(t, \tau) := t - \tau$.  Then
        \begin{IEEEeqnarray*}{*l+rCl}
                &\td{}{t} u_t(\tau) &=& \td{}{t} \bar{u}_t(\eta(t, \tau)) = \td{\bar{u}_t}{\eta} \td{\eta}{t} =  \bar{u}_t^\prime (t - \tau)\\
                \text{and}& \td{}{\tau} u_t &=& \td{}{\tau} \bar{u}_t(\eta(t, \tau)) = \td{\bar{u}_t}{\eta} \td{\eta}{\tau} =  - \bar{u}_t^\prime (t - \tau),\\
        \text{so} & 
        \td{}{t} u_t &=&  -\td{}{\tau}
        u_t.\IEEEyesnumber\label{eq:u_t}
\end{IEEEeqnarray*}
        We then have
        \begin{IEEEeqnarray*}{rCl}
                \td{V}{t}(u_t) &=& \ip{\pd{V}{u}(u_t)}{\pd{u_t}{t}}\\
                          &=& \int_0^\infty \int_0^\infty u_t(\tau)\tran h(\zeta + \tau) \dd{\tau} \pd{u_t}{t}(\zeta) \dd{\zeta}\\
                          &=& -\int_0^\infty y(\zeta)\tran \pd{u_t}{\zeta} (\zeta) \dd{\zeta}, 
        \end{IEEEeqnarray*} 
        where the final line uses Lemma~\ref{lem:gradient} and
        Equation~\eqref{eq:u_t}.  Integration by parts then gives
        \begin{IEEEeqnarray*}{lCl}
        \td{V}{t}(u_t) = -\left[y(\zeta)\tran u_t(\zeta)\right]_0^\infty + \int_0^\infty \td{}{\zeta}y(\zeta)\tran u_t(\zeta) \dd{\zeta} \\
        = y(0)\tran u_t(0) + \\ \int_0^\infty\hspace{-6pt} \int_0^\infty u_t(\tau)\tran \td{h}{\zeta}(\tau +
        \zeta)\dd{\tau} u_t(\zeta) \dd{\zeta}, \IEEEyesnumber \label{eq:parts}
        \end{IEEEeqnarray*}
        where \eqref{eq:parts} uses \eqref{eq:impulse} in the proof of Thm.
        \ref{thm:relaxation_cyclic}.
        Denote $\mathrm{d}g/\mathrm{d}\zeta$ by $g^\prime$.  Then the rightmost term in
        \eqref{eq:parts}
        can be written as
        \begin{IEEEeqnarray}{rCl}
        -\ip{\Gamma_{(-g^\prime)} u_t}{u_t} \leq 0,\label{eq:gradient_ip}
        \end{IEEEeqnarray}
        where the inequality follows from the fact that that $\Gamma_{(-g^\prime)}$ is
        the Hankel operator of a relaxation system (Lemma~\ref{lem:gradient_cyclic}),
        hence cyclic monotone
        (Theorem~\ref{thm:relaxation_cyclic}). Substituting in
        \eqref{eq:parts} gives
        \begin{IEEEeqnarray*}{+rCl+x*}
        \td{V}{t}(u_t) &\leq& u_t(0)y(0) = \bar u_t(t)\tran \bar y(t).&\qedhere
        \end{IEEEeqnarray*}
\end{proof}

A consequence of Rocakfellar's theorem is that the storage $V(u_t)$ is uniquely
determined by the Hankel operator $\Gamma_h$, up to an additive constant.  It was
observed in \autocite{Willems1972a} that this same storage is also uniquely determined
by the requirements of passivity and internal reciprocity.

\section{Conclusions}\label{sec:conclusions}

We have shown that a system being of the relaxation type is equivalent to cyclic
monotonicity of the Hankel operator.  Rockafellar's theorem allows us to construct a
convex storage functional, whose gradient is the Hankel operator, which is completely
determined by input/output measurements.

Cyclic monotonicity is equally well-defined for the Hankel operators of nonlinear
systems, and this allows us to construct intrinsic storages for nonlinear systems.
This will be a topic of future research.

\appendix

\begin{lemma}\label{lem:passivity}
        Consider a stable system  of the form \eqref{eq:system}.
        Suppose that $D = D\tran  \succeq 0$ and there exists a matrix $T = T\tran \succeq 0$ such that
        \begin{IEEEeqnarray*}{rCl}
        A\tran T &=& TA \\
        T B &=& C\tran.
        \end{IEEEeqnarray*}
        Then the system is passive.
\end{lemma}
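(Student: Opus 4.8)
The plan is to exhibit a matrix $Q = Q\tran \succeq 0$ solving the passivity LMI \eqref{eq:lmi} and then invoke its equivalence with Definition~\ref{def:passive}. The natural candidate is $Q = T$. With this choice the off-diagonal blocks of \eqref{eq:lmi} vanish: the $(1,2)$ block is $TB - C\tran = 0$ by the hypothesis $TB = C\tran$, and the $(2,1)$ block is $B\tran T - C = (TB)\tran - C = C - C = 0$, using $T = T\tran$. The $(2,2)$ block is $-D - D\tran = -2D \preceq 0$ since $D \succeq 0$. Hence \eqref{eq:lmi} collapses to the block-diagonal condition
\[
\begin{pmatrix} A\tran T + TA & 0 \\ 0 & -2D \end{pmatrix} \preceq 0,
\]
and, since $A\tran T = TA$, the single nontrivial requirement that remains is $TA \preceq 0$. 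Note that $TA$ is symmetric, as $(TA)\tran = A\tran T\tran = A\tran T = TA$, so this is a genuine definiteness statement.

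The crux is therefore to deduce $TA \preceq 0$ from the three facts that $A$ is Hurwitz, $T = T\tran \succeq 0$, and $T$ intertwines $A$ with $A\tran$. First I would observe that $\ker T$ is $A$-invariant: if $Tv = 0$, then $T(Av) = A\tran T v = 0$, so $Av \in \ker T$. I would then decompose $\R^n = \ker T \oplus \operatorname{ran} T$ orthogonally (possible since $T$ is symmetric) and, in an adapted basis, write
\[
T = \begin{pmatrix} 0 & 0 \\ 0 & T_2 \end{pmatrix}, \qquad A = \begin{pmatrix} A_{11} & A_{12} \\ 0 & A_{22} \end{pmatrix},
\]
where $T_2 \succ 0$ and the lower-left block of $A$ is zero by the invariance just noted. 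A direct multiplication gives $TA = \begin{pmatrix} 0 & 0 \\ 0 & T_2 A_{22} \end{pmatrix}$, so it suffices to show $T_2 A_{22} \preceq 0$.

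On $\operatorname{ran} T$ the intertwining identity restricts to $A_{22}\tran T_2 = T_2 A_{22}$, and $A_{22}$ is Hurwitz because the block-triangular form makes its spectrum a subset of that of $A$. Here the singularity of $T$ has been removed: $T_2$ is positive definite, so I can write $T_2 = S^2$ with $S = S\tran \succ 0$, and the relation becomes $S^{-1} A_{22}\tran S = S A_{22} S^{-1}$, which says that $\tilde A := S A_{22} S^{-1}$ is symmetric. Being similar to $A_{22}$, the matrix $\tilde A$ is also Hurwitz, and a symmetric Hurwitz matrix is negative definite, so $T_2 A_{22} = S \tilde A S \prec 0$. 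This yields $TA \preceq 0$ and completes the verification of \eqref{eq:lmi}.

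I expect the main obstacle to be precisely the singular case: the clean "similar to a symmetric matrix" argument applies verbatim only when $T$ is invertible, and a naive perturbation $T + \epsilon I$ cannot be used because it destroys the identity $A\tran T = TA$. The invariance of $\ker T$ under $A$ is what rescues the argument, letting me peel off the kernel and run the definite computation on $\operatorname{ran} T$. Should the block bookkeeping prove awkward in the MIMO setting, an equivalent route is to verify the dissipation inequality directly, using the candidate storage $\tfrac12 x\tran T x$: its rate of change along trajectories of \eqref{eq:system} equals $u\tran y - u\tran D u + x\tran T A x$, so passivity again reduces, via Proposition~\ref{prop:storage}, to $D \succeq 0$ together with $TA \preceq 0$, and the same kernel-splitting argument is unavoidable.
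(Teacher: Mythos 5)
Your proof is correct and follows essentially the same route as the paper's: reduce the LMI with $Q=T$ to the single condition $TA \preceq 0$, split $\R^n$ along $\ker T$ (the paper does this via the eigendecomposition $T = V\tran\operatorname{diag}(\Lambda_1,0)V$), observe that $A$ becomes block triangular so the relevant diagonal block is Hurwitz, and conclude negative definiteness of $T_2A_{22}$ by a congruence/similarity argument that is the explicit form of the paper's appeal to Sylvester's law of inertia. The only cosmetic difference is that you obtain the triangular structure from $A$-invariance of $\ker T$ rather than from symmetry of $TA$, which amounts to the same computation.
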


\begin{proof}
       It suffices to show that $T$ satisfies \eqref{eq:lmi}, which reduces to $TA
       \preceq 0$ since $TA$ is symmetric.
        We can factorize $T$ as follows:
        \begin{IEEEeqnarray*}{rCl}
        T = V\tran \begin{pmatrix} \Lambda_1 & 0 \\ 0 & 0 \end{pmatrix}V,
        \end{IEEEeqnarray*}
        where $\Lambda_1 > 0$ and $V$ is orthogonal, so
        \begin{IEEEeqnarray*}{rCl}
        VTAV\tran &=& \begin{pmatrix} \Lambda_1 & 0 \\ 0 & 0 \end{pmatrix}VAV\tran.
        \end{IEEEeqnarray*}
        Since $TA$ is symmetric, $VTAV\tran = (VTAV\tran)\tran$.  Define $\bar A
        := VAV\tran$.  Note that $\bar{A}$ is Hurwitz, as $A$ is Hurwitz.
        Partition $\bar{A}$ into
        \begin{IEEEeqnarray*}{rCl}
        \bar{A} &=& \begin{pmatrix} \bar{A}_{11} & \bar{A}_{12} \\
                                    \bar{A}_{21} & \bar{A}_{22}.
                            \end{pmatrix}
        \end{IEEEeqnarray*}
        Then 
        \begin{IEEEeqnarray*}{rCl}
        \begin{pmatrix} \Lambda_1 & 0 \\ 0 & 0 \end{pmatrix}\bar{A} &=&
        \begin{pmatrix} \Lambda_1 \bar{A}_{11} & \Lambda_2 \bar{A}_{12} \\
                                    0 & 0
                            \end{pmatrix}.
        \end{IEEEeqnarray*}
        Since this matrix is symmetric, $\Lambda_1 \bar{A}_{12} = 0$, which implies
        $\bar{A}_{12} = 0$.  Hence $\bar{A}$ is lower block triangular, and so
        $\bar{A}_{11}$ is Hurwitz.  We claim that $\Lambda_1 \bar{A}_{11} < 0$.
        
        Let $\operatorname{In}(A)$ denote the inertia of the matrix $A$.
        Since $\Lambda_1 \bar{A}_{11}$ is symmetric, it follows from Sylvester's law
        of inertia that
        \begin{IEEEeqnarray*}{rCl}
        \operatorname{In}(\Lambda_1\bar{A}_{11}) &=&
        \operatorname{In}(\Lambda_1^{-\frac{1}{2}} \Lambda_1 \bar{A}_{11}
        \Lambda_1^{-\frac{1}{2}})\\
         &=& \operatorname{In}(\Lambda_1^{\frac{1}{2}} \bar{A}_{11}
        \Lambda_1^{-\frac{1}{2}}).
        \end{IEEEeqnarray*}
        This equals the inertia of $\bar{A}_{11}$, so we have that all the
        eigenvalues of $\bar{A}_{11}$ are real and negative, and $\Lambda_1
        \bar{A}_{11} \prec 0$.  Hence
        \begin{IEEEeqnarray*}{rCl}
                \begin{pmatrix} \Lambda_1 & 0 \\ 0 & 0 \end{pmatrix} \preceq 0,
        \end{IEEEeqnarray*}
        so $TA \preceq 0$.
\end{proof}

\printbibliography
\end{document}